\newcommand{\field}[1]{\mathbb{#1}}  
\newcommand{\R}{\field{R}} 
\newcommand{\Zset}{\mathcal{Z}} 
\newcommand{\transp}{^{^{\intercal}}}
\newcommand{\onenorm}[1]{\left|\left|#1\right|\right|_1}
\newcommand{\twonorm}[1]{\left|\left|#1\right|\right|_2}
\newcommand{\infnorm}[1]{\left|\left|#1\right|\right|_\infty}
\newcommand{\knorm}[1]{\left|\left|#1\right|\right|_l}
\newenvironment{definition}[1][Definition]{\begin{trivlist}
\item[\hskip \labelsep {\bfseries #1}]}{\end{trivlist}}
\newtheorem{theorem}{Theorem}[section]
\newtheorem{lemma}[theorem]{Lemma}
\newenvironment{proof}[1][Proof]{\begin{trivlist}
\item[\hskip \labelsep {\bfseries #1}]}{\end{trivlist}}
\newcommand{\qed}{\nobreak \ifvmode \relax \else
      \ifdim\lastskip<1.5em \hskip-\lastskip
      \hskip1.5em plus0em minus0.5em \fi \nobreak
      \vrule height0.75em width0.5em depth0.25em\fi}
\begin{document}

\title{Robust counterparts of inequalities containing sums of maxima of linear functions}

\author{Bram L. Gorissen\footnote{corresponding author}, Dick den Hertog \\ \\
\textit{\small Tilburg University, Department of Econometrics and Operations Research, 5000 LE Tilburg, Netherlands} \\
\textit{\small {\tt \{b.l.gorissen,d.denhertog\}@tilburguniversity.edu}}}
\date{}
\maketitle

\begin{abstract}
This paper addresses the robust counterparts of optimization problems containing sums of maxima of linear functions. These problems include many practical problems, e.g.~problems with sums of absolute values, and arise when taking the robust counterpart of a linear inequality that is affine in the decision variables, affine in a parameter with box uncertainty, and affine in a parameter with general uncertainty.

In the literature, often the reformulation is used that is exact when there is no uncertainty. However, in robust optimization this reformulation gives an inferior solution and provides a pessimistic view. We observe that in many papers this conservatism is not mentioned. Some papers have recognized this problem, but existing solutions are either conservative or their performance for different uncertainty regions is not known, a comparison between them is not available, and they are restricted to specific problems. We describe techniques for general problems and compare them with numerical examples in inventory management, regression and brachytherapy. Based on these examples, we give recommendations for reducing the conservatism.
\end{abstract}

\begin{tikzpicture}[remember picture,overlay]
\node[anchor=south,yshift=10pt] at (current page.south) {\fbox{\parbox{\dimexpr\textwidth-\fboxsep-\fboxrule\relax}{\footnotesize This is an author-created, un-copyedited version of an article published in European Journal of Operational Research \href{http://dx.doi.org/10.1016/j.ejor.2012.10.007}{DOI:10.1016/j.ejor.2012.10.007}.}}};
\end{tikzpicture}
\section{Introduction}
\label{sec:intro}
Robust Optimization (RO) first appeared in \cite{Soyster1973}, and after receiving very little attention in the subsequent decades it has been an active research area since \cite{BenTal99} and \cite{Ghaoui97} started publishing new results in the late nineties. In its basic form, it requires a solution of an optimization problem to be feasible for any realization of the uncertain parameters in a given uncertainty set. For several choices of the uncertainty region this leads to tractable problems; for instance the robust counterpart (RC) of an LP with polyhedral uncertainty can be reformulated as an LP and the RC of an LP with ellipsoidal uncertainty is a conic quadratic program (CQP) \cite[p.~21]{BenTal}.

In RO two distinct formulations that are equivalent in the nonrobust case may have different RCs. This is the case for optimization problems containing the sum of maxima of linear functions, of which the sum of absolute values is a special case ($|x|=\max\{x,-x\}$). These problems arise in inventory management, supply chain management, regression models, tumor treatment, and many other practical situations.

Because RO is applied constraint-wise w.l.o.g.~and the objective function can always be formulated as a constraint \cite[p.~11]{BenTal}, we focus on the following robust constraint:
\begin{align}
  \ell(\zeta,x) + \sum_{i \in I} \max_{j \in J} \{ \ell_{ij}(\zeta,x) \} \leq d && \forall \zeta \in \Zset, \label{cons:general}
\end{align}
where $\ell$ and $\ell_{ij}$ are biaffine functions in the uncertain parameter $\zeta \in \R^L$ and the decision variable $x \in \R^n$, $d \in\R$ is the right hand side, and $\Zset \subset \R^L$ is a user-specified uncertainty region, e.g.~a box or an ellipsoid. We can assume this uncertainty region to be closed and convex w.l.o.g., as the left hand side is convex in $\zeta$ and consequently the worst case is always located at an extreme point of the uncertainty region. In the remainder of this paper our objective is always to minimize $d$, but all methods can still be applied when a different objective is used.

In the literature, often the following RC is used, obtained from a reformulation with analysis variables $y_i$ that is exact when there is no uncertainty:
\begin{align}
\textrm{(RC-R)} \qquad
  \ell(\zeta,x) + \sum_{i \in I} y_i &\leq d                  && \forall \zeta \in \Zset \label{cons:rc1} \\
                                 y_i &\geq \ell_{ij}(\zeta,x) && \forall \zeta \in \Zset \quad \forall i \in I \quad \forall j \in J. \label{cons:rc2}
\end{align}
In this formulation, to which we refer as the RC-R (RC of the reformulation), $y_i \in \R$ is a fixed variable, taking the worst case value of the $i^{th}$ term of the sum. In many cases the terms of the sum do not all reach their worst case value in the same realization of the uncertain parameter $\zeta$, and therefore the RC-R is a conservative reformulation of constraint \eqref{cons:general}, i.e.~a solution $(x,d)$ that is feasible for RC-R is also feasible for constraint \eqref{cons:general}, but not necessarily vice versa. However, this reformulation is frequently used without mentioning its conservatism. It has the advantage that the constraints are linear, so that tractable reformulations exist for many uncertainty regions.

\cite{BertsimasThiele2006} use the RC-R for a robust inventory problem which includes the sum of holding and backlogging costs: $\sum_{i=1}^n \max\{ c_h x_i, -c_b x_i \}$, where $x_i$ is the inventory level at time period $i$ and $c_h$ and $c_b$ are the holding and backlogging costs per time period respectively. Their uncertainty region is the intersection of $\{\zeta : \onenorm{\zeta_{1:i}} \leq \Gamma_i\} \quad \forall i$ and $\{\zeta : \infnorm{\zeta} \leq 1\}$, where $\zeta_{1:i}$ is the vector consisting of the first $i$ elements of $\zeta$, and $\Gamma_i$ is not necessarily integer (budgeted uncertainty). Their formulation allows that the values of different analysis variables take values based on different realizations of the uncertain parameter. The same uncertainty region is also used by \cite{Bertsimas2004} for a supply chain model, by \cite{thiele2004robust} for supply chains and revenue management, by \cite{JoseAlem2012139} for a production planning problem under uncertain demand, and by \cite{Wei2009} for a slightly more complicated inventory model in which items can be returned and remanufactured or disposed. With the exception of \cite[p.37]{thiele2004robust}, none of these papers mentions the conservatism of the formulation.

\cite{Ng2010557} treat a lot allocation problem where each order is assigned to one or more production locations before the production capacity of each location is fully known. Every production location is assigned to at most one order. Their uncertainty region is an ellipsoid over all production locations, while an analysis variable is used for every order, so their formulation is conservative.

\cite{Kropat2010} consider a robust linear cluster regression model using the sum of absolute values with polyhedral and ellipsoidal uncertainty regions. They introduce an analysis variable for every absolute value, which is conservative because every uncertain parameter affects two absolute values.

\cite{BenTal:2005:retailer} solve a multi-period inventory problem and use affine decision rules for the actual decisions (AARC). For every time period there are analysis variables indicating the costs in that time period. These costs are given by the maximum of holding costs and shortage costs, which both are linear functions of past demand. They note that an analysis variable should therefore be replaced with a function being the maximum of the two linear functions. Because that would lead to a very complicated robust counterpart, they replace the analysis variables with linear decision rules instead, which is conservative. Replacing analysis variables with linear decision rules was also done by \cite{BenTal:2009:inventory} and \cite{BenTal20111177}.

\cite{Bienstock2008389} were the first to identify and eliminate the conservatism of the RC-R. The idea behind their solution is that it suffices to make constraint  \eqref{cons:general} hold for just the vertices of the uncertainty region. The constraint then also holds for all other elements in the (convex) uncertainty region, because the constraint is convex in $\zeta$. They generalize it to a cutting plane method that only adds a subset of the vertices, which they successfully tested for computing basestock levels under budgeted uncertainty. It is not yet known how well this cutting plane method performs on other problems or different uncertainty regions.

In this paper, we make the following contributions. First, we identify that a conservative reformulation is often used in the literature without mentioning its conservatism. Second, we make a classification of solution approaches. Third, we present a new cutting plane method (Algorithm \ref{alg:iterrlc}). Fourth, we show that a linear constraint with biaffine uncertainty is a special case. This provides a method to solve or approximate a conic quadratic constraint with box uncertainty, for which currently no efficient methods are available. Fifth, we demonstrate all approaches on several numerical examples. Such a comparison is currently not available. Sixth, based on our numerical examples, we give recommendations for reformulating inequalities containing sums of maxima of linear functions.

The structure of this paper is as follows. Since it is not possible to compare the objective value of different formulations in RO, we introduce a new performance number which is independent of the reformulation in Section \ref{sec:truevalue}. In the examples we consider, this performance number is the slack in constraint \eqref{cons:general}. We provide an overview of exact (non-conservative) formulations, approximations, and cutting plane methods in Section \ref{sec:approaches}. In that section we show similarities between different methods, and also show how several approaches can be combined. The application scope of this paper is extended in Section \ref{sec:rclinwithnonaffineunc}, where we show that our methods can be applied to an uncertain conic quadratic constraint with box uncertainty. We evaluate the methods in Section \ref{sec:numericalexamples} on some small toy problems and three larger problems. We give conclusions in Section \ref{sec:conclusions}, and show how the robust counterparts in the aforementioned papers could be improved.

\section{The true robust value}
\label{sec:truevalue}
This section explains how two solutions from different reformulations can be compared. For a minimization problem containing sums of maxima of linear functions, we define the true robust value of a solution to be the maximum over $\zeta$ in $\Zset$ of the unreformulated problem with all decision variables fixed. We have assumed that in this paper our objective is always to minimize $d$, an analysis variable at the right hand side of \eqref{cons:general}, so in our case the true robust value of a solution $x$ is:
\begin{align}
  v_{true}(x) = \max_{\zeta \in \Zset} \left\{ \ell(\zeta,x) + \sum_{i \in I} \max_{j \in J} \{ \ell_{ij}(\zeta,x) \} \right\}. \label{true-value}
\end{align}
Determining this value is a difficult problem, because it requires the maximization of a convex function over a convex set. The global maximum over $\zeta \in \Zset$ of the sum of maxima of linear functions does not necessarily coincide with a maximum over $\zeta \in \Zset$ of one of those linear functions. One way to obtain the exact value is by solving the following optimization problem with integer variables for fixed $x$:
\begin{align}
& \max        \qquad && \ell(\zeta,x) + \sum_{i \in I} y_i && \label{vtrueasip} \\
& \mbox{s.t.} \qquad && y_i \leq \ell_{ij}(\zeta,x) + M(1-z_{ij})  && \forall i \in I \quad \forall j \in J  \notag \\
&                    && \sum_{j \in J} z_{ij} = 1                  && \forall i \in I \notag \\
&                    && \zeta \in \Zset, y \in \R^{|I|}, z \in \{0,1\}^{|I| \times |J|}, && \notag
\end{align}
where $M$ is a sufficiently large number. This problem is an MILP for polyhedral uncertainty, and an MIQCP for ellipsoidal uncertainty.

Another way of obtaining the exact value is by considering $|J|^{|I|}$ linear optimization problems, e.g.~for $I=\{1,2\}$ and $J=\{1,2\}$ consider the following four optimization problems:
\begin{align}
    &\max_{\zeta \in \Zset} \ell(\zeta,x) + \ell_{1,1}(\zeta,x) + \ell_{2,1}(\zeta,x), \label{vtrueasmanysimple} \\
    &\max_{\zeta \in \Zset} \ell(\zeta,x) + \ell_{1,1}(\zeta,x) + \ell_{2,2}(\zeta,x), \notag \\
    &\max_{\zeta \in \Zset} \ell(\zeta,x) + \ell_{1,2}(\zeta,x) + \ell_{2,1}(\zeta,x),\textrm{ and} \notag \\
    &\max_{\zeta \in \Zset} \ell(\zeta,x) + \ell_{1,2}(\zeta,x) + \ell_{2,2}(\zeta,x). \notag
\end{align}
Each of these problems is easy, because the maximum of an affine function over a box or an ellipsoid can be computed in a few operations. The true robust value is the largest value of the computed maxima.

If determining $v_{true}(x)$ is intractable, bounds can still be obtained. Filling in any $\zeta$ from the set $\Zset$ at the right hand side of equation \eqref{true-value}, for instance the nominal value, gives a lower bound. If the dimension of $\zeta$ is small, the bound can be improved with global optimization techniques. Upper bounds can be obtained by fixing $x$ in any conservative reformulation (such as those mentioned in Section \ref{sec:approx}) of constraint $\eqref{cons:general}$ and optimizing over the other variables.

\section{Solution approaches}
This section lists exact solution approaches and approximation methods, most of which can be applied to general RO problems containing the sum of maxima of linear functions. Many of these methods have been used before, but this full classification is new. This allows us to show the similary between some methods, and to show how approximations can be combined with exact methods.
\label{sec:approaches}
\subsection{Exact reformulations}
\subsubsection{Vertex enumeration}
Vertex enumeration is an exact solution method first used for a RO problem containing the sum of maxima by \cite{Bienstock2008389} that is powerful especially when the uncertainty region has a small number of vertices. Let $V$ denote the finite set of vertices, and consider the following reformulation of constraint \eqref{cons:general}:
\begin{align}
\textrm{(Vertex enumeration)} \qquad   \ell(\zeta,x) + \sum_{i \in I} y^\zeta_i &\leq d   && \forall \zeta \in V \label{cons:ve} \\
   y^\zeta_i &\geq \ell_{ij}(\zeta,x)                 && \forall i \in I \quad \forall j \in J \quad \forall \zeta \in V. \label{cons:ve2}
\end{align}
Constraint \eqref{cons:ve2} is no longer a semi-infinite constraint, because $V$ is a finite set. This reformulation is exact because the left hand side of constraint \eqref{cons:general} is convex in $\zeta$, and a convex function takes its maximum at an extreme point of its domain.

\subsubsection{Enumeration of robust linear constraints}
The RC-R is inexact because it has analysis variables that may take values corresponding to different worst case scenarios. A constraint with a single max function does not suffer from this problem, because an equivalent set of linear constraints can be formulated without analysis variables. An exact reformulation of RC \eqref{cons:general} can be obtained by first rewriting it as a constraint with a single $\max\{ \cdot \}$ function by enumerating all combinations, and then applying RO to the reformulation:
\begin{align*}
\textrm{(EORLC)} \qquad   &\ell(\zeta,x) + \sum_{i \in I} \ell_{i,j(i)}(\zeta,x) \leq d && \forall j(i) \in J \qquad \forall \zeta \in \Zset.
\end{align*}
We call this the enumeration of robust linear constraints (EORLC) formulation. It has the advantage that the constraints are linear, so that tractable reformulations exist for many uncertainty regions. For example, with $I=\{1,2\}$ and $J=\{1,2\}$, the EORLC formulation has the following constraints:
\begin{align*}
                    &\ell(\zeta,x) + \ell_{1,1}(\zeta,x)+\ell_{2,1}(\zeta,x) \leq d && \forall \zeta \in \Zset \\
                    &\ell(\zeta,x) + \ell_{1,1}(\zeta,x)+\ell_{2,2}(\zeta,x) \leq d && \forall \zeta \in \Zset \\
                    &\ell(\zeta,x) + \ell_{1,2}(\zeta,x)+\ell_{2,1}(\zeta,x) \leq d && \forall \zeta \in \Zset \\
                    &\ell(\zeta,x) + \ell_{1,2}(\zeta,x)+\ell_{2,2}(\zeta,x) \leq d && \forall \zeta \in \Zset.
\end{align*}
A similar formulation is also given by \cite{Bienstock2008389}, where it was neglected for its exponential size. While the number of constraints $|J|^{|I|}$ indeed grows exponentially with the number of terms in the summation, it is effective for small $|I|$. There are situations in which $I$ is small indeed, for instance with a planning horizon of up to ten periods.

EORLC has a strong relation to vertex enumeration that was not observed before to the best of our knowledge. In fact, EORLC is vertex enumeration on a different set. Constraint \eqref{cons:general} can be formulated as:
\begin{align*}
  \ell(\zeta,x) + \sum_{i \in I} \sum_{j \in J} \lambda_{ij} \ell_{ij}(\zeta,x) \leq d && \forall \lambda_i \in \Delta^{|J|-1} \quad \forall \zeta \in \Zset,
\end{align*}
where $\Delta^{|J|-1} = \{\lambda_i \in \R^{|J|} : \sum_{j \in J} \lambda_{ij} = 1, \lambda_{ij} \geq 0\}$ (the standard simplex in $\R^{|J|}$). The vertices of the simplex are given by unit vectors. If $\lambda_i$ is a unit vector, $\sum_{j \in J} \lambda_{ij} \ell_{ij}(\zeta,x)$ simplifies to a single $\ell_{ij}(\zeta,x)$. It follows that vertex enumeration on the $|I|$ simplices gives the EORLC reformulation.

EORLC can benefit from two preprocessing steps in order to reduce the final number of constraints. First, every $\max\{\cdot\}$ term should contain at most one function that does not depend on $\zeta$. If it has more than one, those functions can all be replaced with a single analysis variable. Second, inequalities of the form $a \max\{0,\ell(\zeta,x) \} + b \max\{ 0, -\ell(\zeta,x) \} \leq d$ with $a,b>0$, which are often used for holding and backlogging costs, should be reformulated as $\max\{a \ell(\zeta,x), -b \ell(\zeta,x) \} \leq d$.

\subsubsection{Cases with special structure}
\label{sec:specialstructure}
There are several special cases of \eqref{cons:general} that allow an exact reformulation. This section lists a few general cases. More specific cases can be found in \cite[Ch. 12.2]{BenTal} and in \cite{Xu:2009}.

The first case is when the uncertainty region $\Zset$ is the direct product of sets $\Zset_i$, where term $i$ in the left hand side of constraint \eqref{cons:general} is only affected by $\Zset_i$. The RC-R is exact because all analysis variables can take their worst case values simultaneously.

The second case is when the inequality contains the sum of absolute values of linear functions of $\zeta_i$ and $x$:
\begin{align}
	\ell(\zeta,x) + \sum_{i \in I} | \alpha_i(x) + \beta_i(x) \transp \zeta | \leq d \qquad \forall \zeta \in \Zset, \label{eq:special1}
\end{align}
where $\alpha_i : \R^n \to \R$ and $\beta_i : \R^n \to \R$ are linear functions, the components of $\beta_i$ that may be nonzero for one $i$ are zero for all other $i$ in $I$, and the uncertainty region $\Zset$ is centrosymmetric around $\zeta=0$, i.e.~$\Zset$ is closed under changing the sign of one or more vector elements (a box and an ellipsoid are examples of such sets). Note that the assumption that the symmetry is around $0$ is made w.l.o.g. The following constraint is equivalent to \eqref{eq:special1}:
\begin{align}
\ell(\zeta,x) + \sum_{i \in I} \left\{ | \alpha_i(x) | + \beta_i(x) \zeta_i \right\} \leq d \qquad \forall \zeta \in \Zset. \label{eq:special2}
\end{align}
Equivalence is readily checked by conditioning on the sign of $\alpha_i(x)$. The formulation for \eqref{eq:special2} where absolute values are replaced with analysis variables is equivalent to \eqref{eq:special2}, since the analysis variables do not depend on $\zeta$.

The third case is when, for a fixed $i \in I$, each of linear functions under the maximum is the sum of a common nonnegative linear function of $\zeta$ and a linear function of $x$:
 \[   \ell(\zeta,x) + \sum_{i \in I} \max_{j \in J} \{ \alpha_i(\zeta) + \beta_i(\zeta) \ell_{ij}(x) \} \leq d \qquad \forall \zeta \in \Zset,    \]
where $\alpha_i : \R^L \to \R$ and $\beta_i : \R^L \to \R_+$ are linear functions. The common functions of $\zeta$ can be placed outside the $\max \{\cdot \}$ expression:
 \[   \ell(\zeta,x) + \sum_{i \in I} \left[ \alpha_i(\zeta) + \beta_i(\zeta) \max_{j \in J} \{ \ell_{ij}(x) \} \right] \leq d \qquad \forall \zeta \in \Zset,    \]
and the RC-R of this constraint is exact. If the range of $\beta_i$ is $\R$ instead of $\R_+$, then $\max_{j \in J} \{ \ell_{ij}(x) \}$ should be $\min_{j \in J} \{ \ell_{ij}(x) \}$ when $\beta_i(\zeta)<0$. This can be modeled as follows. Given a subset $I_+ \subseteq I$, we define a set which consists of those $\zeta$ for which $\beta_i(\zeta) \geq 0$ for $i \in I_+$, and $\beta_i(\zeta) \leq 0$ for $i \in I \backslash I_+$:
\[ \Zset(I_+) = \Zset \cap \{\zeta : \beta_i(\zeta) \geq 0 \quad \forall i \in I_+, \quad \beta_i(\zeta) \leq 0 \quad \forall i \in I \backslash I_+ \}. \]
Note that \[ \Zset = \bigcup_{I_+ \subseteq I } \Zset(I_+). \]
The constraint can now be written as:
\[ \ell(\zeta,x) + \sum_{i \in I} \alpha_i(\zeta) + \sum_{i \in I_+} \beta_i(\zeta) \max_{j \in J} \{ \ell_{ij}(x) \} + \sum_{i \in I \backslash I_+} \beta_i(\zeta) \min_{j \in J} \{ \ell_{ij}(x) \} \leq d \qquad \forall \zeta \in \Zset(I_+) \qquad \forall I_+ \subseteq I. \]
The number of constraints is $2^{|I|}$ (one for each $I_+ \subseteq I$), which is less than the $|J|^{|I|}$ constraints obtained with EORLC. The $\max$ and $\min$ expressions do not depend on $\zeta$, so the reformulation with analysis variables is exact. Each constraint is still convex despite the min expressions, because their coefficients $\beta_i(\zeta)$ are negative.

\subsection{Conservative approximations}
\label{sec:approx}
The RC-R \eqref{cons:rc1}-\eqref{cons:rc2} is a conservative approximation to \eqref{cons:general}. We discuss how the conservatism can be decreased, and also present a new method.

The variables $y_i$ in the RC-R have been introduced for modeling the $\max \{\cdot \}$ expression. We do not need to know their values because they do not correspond to a ``here and now'' decision. Only $x$ has to be known for implementing a solution. The values of $y_i$ may be adjusted according to the realization of the uncertain parameter $\zeta$ as long as the constraints hold for every realization of $\zeta$ in the perturbation set (Adjustable RC). As first applied by \cite{BenTal:2005:retailer} for a multi-stage problem and later also done by \cite{BenTal:2009:inventory} and \cite{BenTal20111177}, we can make $y_i$ an affine function of $\zeta$, which leads to the Affinely Adjustable RC of the reformulation (AARC-R). After substituting $y_i = v_i + w_i \transp \zeta$ (with decision variables $v_i \in \R$ and $w_i \in \R^L$), constraints \eqref{cons:rc1}-\eqref{cons:rc2} become:
\begin{align*}
\textrm{(AARC-R)} \qquad \ell(\zeta,x) + \sum_{i \in I} \left( v_i + w_i \transp \zeta \right) &\leq d && \forall \zeta \in \Zset \\
 v_i + w_i \transp \zeta  &\geq \ell_{ij}(\zeta,x) && \forall \zeta \in \Zset \quad \forall i \in I \quad \forall j \in J.
\end{align*}
This substitution gives a less conservative reformulation, while the robust counterpart is often still tractable, because robust linear constraints are tractable for a wide class of uncertainty regions. The power of the AARC-R can be increased by lifting the uncertainty region to a higher dimension \citep{Chen2009}. If $\ell_{ij}$ does not depend on one or more components of $\zeta$ for all $j$ for some fixed $i$, the computational complexity can seemingly be reduced by making $y_i$ a function of only those components of $\zeta$ that appear in term $i$ for one or more $j$. However, it is easy to construct an example where this reduction introduces more conservatism.

There are two different approaches that also lead to the formulation AARC-R. The first approach is considering the Fenchel dual problem of maximizing the left hand side of constraint \eqref{cons:general} over $\zeta$ in $\Zset$. We give the full derivation and a proof of equivalence to the AARC-R in Appendix \ref{sec:rewrite-fenchel}. The other approach is derived in Appendix \ref{sec:deriv-marleen}.

An approach that is less conservative than an affine decision rule, is a quadratic decision rule:
 \[  y_i = v_i + w_i \transp \zeta + \zeta \transp W_i \zeta,  \]
where $v_i \in \R$, $w_i \in \R^L$, and $W_i \in \R^{L \times L}$ are new analysis variables. This is called a Quadratically Adjustable RC of the reformulation (QARC-R) which is known to be tractable for ellipsoidal uncertainty and (under some restrictions) for box uncertainty. A deterministic reformulation of the QARC-R with ellipsoidal uncertainty is given in Appendix \ref{sec:deriv-qarc-ellipsoidal}, resulting in $|I||J|+1$ LMIs of size $|L+1|$. For box uncertainty, when the quadratic terms are restricted such that each element of $\zeta$ is multiplied with itself and at most one other element of $\zeta$, we can use the result by \cite{Yanikoglu2012} to write the RC as an SDP with $(|I||J|+1)\lceil L/2 \rceil$ variable matrices of size three.

A new way to reduce the conservatism of the RC-R is by first combining several $\max$ expressions before reformulating. In order to do this, partition $I$ into $|G|$ groups: $I = \bigcup_{g \in G} I_g$, where the groups are mutually disjoint: $I_{g_1} \cap I_{g_2} = \emptyset$ for $g_1,g_2 \in G, g_1 \neq g_2$, and partition in such a way that $|I_g|$ is small for all $g$. Now introduce analysis variables for all $g$:
\begin{align*}
\ell(\zeta,x) + \sum_{g \in G} y_g &\leq d && \forall \zeta \in \Zset \\
y_g                                &\geq \sum_{i \in I_g} \max\{ \ell_{ij}(\zeta,x) \} && \forall \zeta \in \Zset \quad \forall g \in G.
\end{align*}
Because the cardinality of $I_g$ is small, the sum of $\max$ expression within each constraint can be transformed into a single $\max$ (EORLC). Each constraint in this reformulation is therefore tractable and can be solved exactly, but conservatism still comes from the analysis variables $y_g$. These analysis variables can also be written as a linear, quadratic or more general function of $\zeta$.

\subsection{Cutting plane methods}
In this section we describe two cutting plane methods. The first one is based on vertex enumeration and was used in \cite{Bienstock2008389}, the other one is new and based on EORLC.

Vertex enumeration results in very large problems if there are many vertices. For box uncertainty, the number of vertices grows exponentially in the time horizon. Also for budgeted uncertainty, which is described in Section \ref{sec:intro}, the number of vertices quickly becomes very large. The cutting plane method outlined in Algorithm \ref{alg:itervertex} adds only a subset of the vertices.
\begin{algorithm}[H]
\caption{Cutting plane method based on vertex enumeration}
\label{alg:itervertex}
\begin{algorithmic}[1]
\REQUIRE A linear program $LP$ with constraints $\eqref{cons:ve}$
\STATE $V := \{\zeta^{nom}\}$ (the nominal value)
\STATE $k := 0$
\REPEAT
  \STATE $k := k+1$
  \STATE Solve $LP$
  \STATE Let $x^*$ be the minimizer of $LP$, and $LB$ be the value of $d$ in $LP$
  \STATE Let $UB := v_{true}(x^*)$, and $\zeta^k$ be its maximizer
  \STATE $V := V \cup \{ \zeta^k \}$ \label{algl:addtootheralg}
\UNTIL{$UB-LB < \varepsilon$}
\end{algorithmic}
\end{algorithm}

While the algorithm is running, $LB$ is a lower bound on the optimal value of $d$ in \eqref{cons:ve} because it is the value of a relaxation of the original problem, and $UB$ is an upper bound on the optimal $d$, because it is the maximum for some $x$ and not for the optimal $x$. The difference $UB-LB$ indicates the current violation of the constraint. If $\varepsilon$ is set to a larger value, the algorithm terminates quicker but does not give the optimal solution. If $\varepsilon=0$ and the algorithm terminates, the final solution is robust feasible and robust optimal. It is possible that this algorithm still enumerates all constraints, but we have not encountered problems for which this is the case. \cite{Bienstock2008389} find that for their problem the number of iterations does not increase when the time horizon is increased, and is around four on average even for 150 time periods. The number of vertices of their uncertainty region cannot be deducted from their report. In our numerical examples we often find a larger number of iterations. The same cutting plane method was used by \cite{Bohle2010245}, again for budgeted uncertainty, but solution times and the number of generated constraints are not reported. It is still an open question how well this method works on other problems, and for non-polyhedral uncertainty regions.

The most time consuming step is determining the true value, since this involves the maximization of a convex function. As pointed out by \cite{Bienstock2008389}, it is not necessary to find the optimal solution. It suffices to find any $\zeta$ for which the maximization problem has a larger objective value than $LB$, because it corresponds to a violated constraint in the relaxation, and adding that constraint strengthens the relaxation. Bienstock et al. report that also the problem $LP$ does not need to be solved to optimality. Optimization can stop as soon as the objective value is less than $UB$, because at that point the current solution is already better than the solution in the previous run.

The second cutting plane method is new and based on EORLC. If $|I|$ becomes too large for EORLC, this cutting plane method adds only a subset of the robust linear constraints. It starts with the nominal problem and adds new robust constraints until the solution is robust feasible (Algorithm \ref{alg:iterrlc}).
\begin{algorithm}[H]
\caption{Cutting plane method based on EORLC}
\label{alg:iterrlc}
\begin{algorithmic}[1]
\REQUIRE A linear program $LP$ with constraints $\eqref{cons:ve}$
\STATE $V := \{\zeta^{nom}\}$ (the nominal value)
\STATE $k := 0$
\REPEAT
  \STATE $k := k+1$
  \STATE Solve $LP$
  \STATE Let $x^*$ be the minimizer of $LP$, and $LB$ be the value of $d$ in $LP$
  \STATE Let $UB := v_{true}(x^*)$, and $\zeta^k$ be its maximizer
  \STATE Let $j^k_i$ be the maximizer of $\max_{j \in J} \ell_{ij}(\zeta^k,x)$
  \STATE Add the following robust constraint to $LP$:
\begin{align*}
\ell(\zeta,x) + \sum_{i \in I} \ell_{i, j^k_i}(\zeta,x) \leq d \qquad \forall \zeta \in \Zset
\end{align*} \label{algl:insertpoint}
\UNTIL{$UB-LB < \varepsilon$} \label{algl:afterinsertpoint}
\end{algorithmic}
\end{algorithm}

Similar to Algorithm \ref{alg:itervertex}, a lower and upper bound are reported and the stopping criterion can be adjusted. Also for this algorithm, we have not encountered numerical examples in which all constraints are enumerated. The main difference between this algorithm and Algorithm \ref{alg:itervertex} is the constraint that is added in every iteration.

Just as with the cutting plane method based on vertex enumeration, determining the true value is the most time consuming step, and also here the optimization problem for determining the true value does not need to be solved to optimality as long as the value is larger than $LB$. Also $LP$ does not need to be solved to optimality.

Both cutting plane methods can easily be combined so that two sets of constraints are added per iteration. This is done by inserting line \ref{algl:addtootheralg} of Algorithm \ref{alg:itervertex} in between lines \ref{algl:insertpoint} and \ref{algl:afterinsertpoint} of Algorithm \ref{alg:iterrlc}.

The stopping criterion in both algorithms is $UB-LB < \varepsilon$. It is also possible to use a dimensionless stopping criterion such as $2 (UB-LB) / (1+|UB+LB|) < \varepsilon$. This stopping criterion is based on the relative gap.

\section{RC of a linear constraint with biaffine uncertainty}
\label{sec:rclinwithnonaffineunc}
Constraint \eqref{cons:general} may appear in RO itself when a constraint has biaffine uncertainty, and the uncertainty region of one parameter is a box. In general, such a constraint can be written as:
\begin{align} \label{eq:doubleuncertainty1}
\tilde{\ell}(\zeta^{(1)},\zeta^{(2)},x)  \leq d   \qquad \forall \zeta^{(1)} : \infnorm{\zeta^{(1)}} \leq \rho
                                  \quad  \forall \zeta^{(2)} \in \Zset,
\end{align}
where $\tilde{\ell} : \R^{|I|} \times \R^{L} \times \R^n \to \R$ is a triaffine function, $\rho$ is the radius of the box, and $\Zset$ is the uncertainty region of $\zeta^{(2)}$. Constraints with biaffine uncertainty have never been investigated before to the best of our knowledge. To show the equivalence to constraint \eqref{cons:general}, choose $\ell$, $\ell_{i}$ and $I$ in such a way that constraint \eqref{eq:doubleuncertainty1} is equivalent to:
\begin{align*}
\ell(\zeta^{(2)},x) + \sum_{i \in I} \zeta^{(1)}_i \ell_{i}(\zeta^{(2)},x) \leq d   \qquad \forall \zeta^{(1)} : \infnorm{\zeta^{(1)}} \leq \rho
                                                                                    \quad  \forall \zeta^{(2)} \in \Zset,
\end{align*}
and maximize the left hand side with respect to $\zeta^{(1)}$:
\begin{align}
\ell(\zeta^{(2)},x) + \sum_{i \in I} \rho | \ell_{i}(\zeta^{(2)},x) | \leq d  \qquad \forall \zeta^{(2)} \in \Zset, \label{eq:doubleuncertainty2}
\end{align}
which is indeed in the form of constraint \eqref{cons:general}. This RC includes many practical problems, of which we give four examples.

The first example is a constraint $a \transp x \leq d$ where each element of $a$ has been estimated using a regression model with the same regressors $\zeta$ for every element: $a_i = \beta^0_i + (\beta^{(i)}) \transp \zeta + \varepsilon_i$ ($\beta^0_i \in \R, \beta^{(i)} \in \R^L, \varepsilon_i \in \R$), in that case the constraint is:
\[ (\beta^0 + B \zeta + \varepsilon) \transp x \leq d. \]
We assume that the coefficients $\beta^0 \in \R^n$ and $B \in\R^{n \times L}$, the matrix having $\beta^{(i)}$ as rows, have been estimated with some error, and also that the regressors $\zeta$ are not fully known. The constraint can be written as constraint \eqref{eq:doubleuncertainty1} if $\beta^0$, $B$, $\zeta$ and $\varepsilon$ lie in some specified uncertainty region, and the uncertainty of either $B$ or $\zeta$ is a box.

The second example appears in Adjustable RCs. Consider a robust constraint:
\[   \ell(\zeta^{(1)},\zeta^{(2)},x) + b \transp y \leq d \qquad \forall \zeta^{(1)} : \infnorm{\zeta^{(1)}} \leq \rho \quad \forall \zeta^{(2)} \in \Zset,   \]
where $y \in \R^{m_1}$ is an adjustable variable that represents a ``wait and see'' decision that can be made after $\zeta^{(1)}$ and $\zeta^{(2)}$ are (partially) observed, and $b$ is a vector of known coefficients (fixed recourse). Determining the true optimal policy for $y$ as a function of $\zeta^{(1)}$ and $\zeta^{(2)}$ is often intractable, which is why a suboptimal $y$ is often determined by a parameterization, i.e.~$y$ is written as a function of which the coefficients are decision variables. The first parameterization in RO was proposed by \cite{BenTal:2005:aarc} who proposed an affine decision rule, which results in a problem which is in the same class (LP, CQP or SDP, depending on the uncertainty region) as the problem with $y$ as a ``here and now'' decision variable. This was later extended to a quadratic decision rule with an ellipsoidal uncertainty region, resulting in an SDP (\cite{BenTal}), and to a polynomial decision rule of arbitrarily large degree restricted to uncertainty regions described by polynomial inequalities, resulting in a conservative SDP (\cite{BertsimasPolynomials}). The latter includes the biaffine decision rule $y = \ell^{'}(\zeta^{(1)},\zeta^{(2)},v)$, where $\ell^{'}$ is a function $\R^{|I|} \times \R^{L} \times \R^{m_2} \to \R^{m_1}$, and $v$ is a vector of coefficients to be determined by the model. This decision rule could be very useful if a problem is affected by several sources of uncertainty. The advantage over affine decision rules is that the former includes cross terms of $\zeta^{(1)}$ and $\zeta^{(2)}$. Applying the results of \cite{BertsimasPolynomials} gives an SDP which is not only conservative, but also has a potentially large instance size. E.g.~if $|I|$ (the dimension of $\zeta^{(1)}$) is very small but $\zeta^{(2)}$ is in a box of (large) dimension $L$, the conservative SDP has at least one variable matrix of size $(|I|+1)L+|I|+1$ and over $2|L|$ smaller matrices, while our result is exact and gives a practically solvable LP. If instead $\zeta^{(2)}$ is in an ellipsoid of (large) dimension $L$, the SDP still has the large variable matrix of size $(|I|+1)L+|I|+1$ while our exact result gives a CQP for which efficient solvers are available.

The third example is a constraint with unknown coefficients and implementation error. Consider the following constraint:
\begin{align}
	\ell(\zeta^{(1)}) \transp x \leq d   \qquad \forall \zeta^{(1)} \in \Zset_1, \label{eq:impl-error}
\end{align}
where $\ell$ is a vector of $n$ functions that are linear in the uncertain parameter $\zeta^{(1)}$. Now suppose that there is implementation error in $x$, i.e.~instead of $x$ we implement a vector of which component $i$ is given by $x_i + \zeta_i^{(2)}$ (additive implementation error) or by $\zeta_i^{(2)} x_i$ (multiplicative implementation error). After substituting $x$, constraint \eqref{eq:impl-error} becomes:
\[ \ell(\zeta^{(1)}) \transp (x + \zeta^{(2)}) \leq d   \qquad \forall \zeta^{(1)} \in \Zset_1 \quad \forall \zeta^{(2)} \in \Zset_2, \]
in case of additive implementation error, and:
\[ \sum_{i=1}^n \ell_i(\zeta^{(1)}) \zeta_i^{(2)} x_i \leq d   \qquad \forall \zeta^{(1)} \in \Zset_1 \quad \forall \zeta^{(2)} \in \Zset_2, \]
in case of multiplicative implementation error. Both constraints are special cases of constraint \eqref{eq:doubleuncertainty1} if either $\Zset_1$ or $\Zset_2$ is a box.

The fourth example is the following robust constraint with box uncertainty:
\begin{align}\label{eq:rcqwithbox}
\ell(\zeta^{(1)},x) + \sum_{k \in K} \knorm{ \ell_{k}(\zeta^{(1)},x)} \leq d  \qquad \forall \zeta^{(1)} : \infnorm{\zeta^{(1)}} \leq \rho, 
\end{align}
where $\ell_{k}$ is a vector of $L$ linear functions, and $l$ equals $1$, $2$ or $\infty$. In order to see that this constraint is equivalent to constraint \eqref{eq:doubleuncertainty1}, note that constraint \eqref{eq:rcqwithbox} is a reformulation of the following constraint:
\begin{align*}
\ell(\zeta^{(1)},x) + \sum_{k \in K} (\zeta_{k}^{(2)}) \transp \ell_{k}(\zeta^{(1)},x) \leq d
                                                            \qquad \forall \zeta^{(1)} : \infnorm{\zeta^{(1)}} \leq \rho
                                                            \qquad \forall \zeta^{(2)} : \knorm{\zeta_{k}^{(2)}}^* \leq 1 \quad \forall k \in K,
\end{align*}
where $\knorm{\cdot}^*$ is the dual norm, and $\zeta_k^{(2)}$ is a vector in $\R^L$ for all $k$ in $K$. We will show how the results in this paper can be used for solving the robust constraint \eqref{eq:rcqwithbox} for different choices of $l$.

For $l=2$ and $|K|=1$, constraint \eqref{eq:rcqwithbox} is a robust conic quadratic constraint, for which the RC is known only in special cases, one of which is when the vertices of the uncertainty region can be enumerated \cite[p.~159]{BenTal}. The case $|K|>1$ has not been covered yet. A (conservative) reformulation with analysis variables for every $\twonorm{ \ell_{k}(\zeta^{(1)},x)}$ reduces the problem to a problem with one linear constraint and $|K|$ robust conic quadratic constraints, each of which is of the form \eqref{eq:rcqwithbox} with $|K|=1$, so it can be reformulated using vertex enumeration. A different approach, that is not only exact but also results in a smaller problem than obtained by using analysis variables, is to apply vertex enumeration to constraint \eqref{eq:rcqwithbox} directly. Vertex enumeration is exact because the constraint is convex in $\zeta^{(1)}$. If the dimension of the box is very large, (iterative) vertex enumeration is no longer tractable, so even the case $|K|=1$ becomes unsolvable, and tractable conservative reformulations are not known. Our reformulation \eqref{eq:doubleuncertainty2} allows the use of all approaches from Section \ref{sec:approaches}.

For $l=1$ constraint \eqref{eq:rcqwithbox} is a special case of constraint \eqref{cons:general}, and we know from Section \ref{sec:approaches} how to solve it or how to find a conservative reformulation. The reformulation may be useful if it is solved with the RC-R, the AARC-R or the QARC-R, because the resulting formulation is very different. Vertex enumeration and EORLC are not useful on the reformulation \eqref{eq:doubleuncertainty2}, because they correspond with EORLC and vertex enumeration on the original constraint, respectively.

For $l=\infty$ constraint \eqref{eq:rcqwithbox} is a special case of constraint \eqref{cons:general} with $I = K$ and $J = \{1,2,..,L\}$, and we know from Section \ref{sec:approaches} how to solve it or how to find a conservative reformulation. The reformulation \eqref{eq:doubleuncertainty2} may be useful if it solved with the RC-R, the AARC-R or the QARC-R, because the resulting formulation is very different. The reformulation allows to do vertex enumeration on $\zeta^{(2)}$, which may be faster than vertex enumeration in the original constraint (which is done on the vertices of $\zeta^{(1)}$) if $L$ and $|K|$ are small relative to $|I|$.

For all three cases of $l$, it holds that when EORLC is used on the reformulation \eqref{eq:doubleuncertainty2}, the resulting constraints are the same ones resulting from vertex enumeration on constraint \eqref{eq:rcqwithbox}. This implies that reformulating \eqref{eq:doubleuncertainty2} is a redundant step if it is followed by EORLC.
\section{Numerical examples}
\label{sec:numericalexamples}
The LP, MILP, CQP and MICQP problems in this paper have been solved with AIMMS 3.11 FR3 x32 with ILOG CPLEX 12.1 unless stated otherwise. SDP problems have been modeled with CVX (\cite{CVX}) and solved with SDPT3 (YALMIP (\cite{Lofberg:2010}) would nowadays be a better choice as it allows the user to specify a linear constraint with quadratic uncertainty with an ellipsoidal uncertainty region directly). Reported computing times have been obtained under Windows XP SP3 on an Intel Core2 Duo E6400 (2.13 GHz) processor and 2 GB of RAM.
\subsection{Computing the true robust value}
We have run some experiments to determine how quickly the true robust value can be computed using the two different methods from Section \ref{sec:truevalue}. Using the problem with integer variables \eqref{vtrueasip}, the worst case $\zeta$ can be determined in less than a minute for $|I|=50$ and $|J|=3$ for box uncertainty of dimension $50$, and in $7$ seconds for $|I|=20$ and $|J|=2$ for ellipsoidal uncertainty of dimension $20$. For testing the performance of maximizing the $|J|^{|I|}$ linear optimization problems \eqref{vtrueasmanysimple}, we have created a single threaded C++ application that creates an affine function with coefficients randomly taken from the interval $[-100,100]$, maximizes that function, randomly selects new coefficients, et cetera. Only the running time of the maximization step is measured. It can maximize $1.5 \times 10^8$ affine functions $f : \R^{50} \to \R$ per minute over a box, which is $10^{23}$ times slower than the MILP. It can maximize $3 \times 10^8$ affine functions $g : \R^{20} \to \R$ per minute over an ellipsoid, which is 32 times faster than the MIQCP.
\subsection{Illustrative small problems}
\label{sec:toyproblems}
Consider the following toy optimization problem:
\begin{align*}
\textrm{(TOY1)} \qquad
& \min  			\qquad	&&d && \\
& \mbox{s.t.} \qquad	&&d \geq \max\{x, x + \zeta\} + \max\{x, x-\zeta\} && \forall \zeta \in [-1,1] \\
&                     &&d \in \R, x \in \R_+. &&
\end{align*}
The optimal value of this problem is $1$ ($x=0, d=1$). If we model this problem as an LP and then apply RO, we get the following model:
\begin{align*}
\textrm{(RC-R)} \qquad
&\min        \qquad	 &&y_1 + y_2           && \\
&\mbox{s.t.} \qquad	 &&y_1 \geq  x         && \forall \zeta \in [-1,1] \\
&                    &&y_1 \geq  x + \zeta && \forall \zeta \in [-1,1] \\
&                    &&y_2 \geq  x         && \forall \zeta \in [-1,1] \\
&                    &&y_2 \geq  x - \zeta && \forall \zeta \in [-1,1] \\
&                    &&y_1,y_2 \in \R, x \in \R_+. &&
\end{align*}
The optimal value of this model is $2$ ($x=0, y_1 = y_2 = 1$), so the RC-R is not exact. If we substitute $y_i = v_i + w_i \zeta$, the model becomes:
\begin{align*}
\mbox{(AARC-R)} \qquad
&\min        \qquad  &&y && \\
&\mbox{s.t.} \qquad  &&y \geq v_1 + w_1 \zeta + v_2 + w_2 \zeta   && \forall \zeta \in [-1,1]  \\
&                    &&v_1 + w_1 \zeta \geq x                     && \forall \zeta \in [-1,1] \\
&                    &&v_1 + w_1 \zeta \geq x + \zeta             && \forall \zeta \in [-1,1] \\
&                    &&v_2 + w_2 \zeta \geq x                     && \forall \zeta \in [-1,1] \\
&                    &&v_2 + w_2 \zeta \geq x - \zeta             && \forall \zeta \in [-1,1] \\
&                    &&v_1,v_2,w_1,w_2,y \in \R, x \in \R_+.      &&
\end{align*}
The optimal value of this problem is $1$ ($v_1=v_2=\frac{1}{2}, w_1=\frac{1}{2}, w_2=-\frac{1}{2}, y = 1$), which is the same as the optimal value of the original problem. So, in this case the AARC-R closes the gap. This is not always the case as the following example shows:
\begin{align*}
\mbox{(TOY 2)} \qquad
&\min  			\qquad	&& d && \\
&\mbox{s.t.} \qquad	&& d \geq \max\{x, x + \zeta_1 + \zeta_2 \} + \max\{x, x + \zeta_1 - \zeta_2\} && \nonumber \\
&                   && \qquad + \max\{x, x - \zeta_1 + \zeta_2\} + \max\{x, x - \zeta_1 - \zeta_2\} && \forall \zeta \in [-1,1]^2 \\
&                   && d \in \R, x \in \R_+. && 
\end{align*}
We obtain $v_{true}=2$, $v_{RC-R}=8$ and $v_{AARC-R}=4$. The AARC-R is an improvement over the RC-R, but is not exact. The QARC-R, which can be derived by reparameterizing the uncertainty region to $[0,1]^2$, applying the work of \cite{Yanikoglu2012} to obtain an LMI description of the uncertainty region, and formulating the RC to a deterministic program by dualization, gives $v_{QARC-R} = 2.83$. While this value is much closer to $2$ than the value of the AARC-R, it is still inexact.

\subsection{Least absolute deviations regression with errors-in-variables}
\label{sec:exreg}
An errors-in-variables regression model is a model $y_i = \beta_0 + \beta_1 x_i^* + \varepsilon_i$ ($\varepsilon_i \sim N(0,\sigma^2)$ i.i.d.) where $x_i^*$ cannot be measured accurately. Only $x_i$ and $y_i$ with $x_i^* = (1+\zeta_i)x_i$ are observed, where $\zeta_i$ is an unknown measurement error. The least absolute deviations approach estimates $\beta_0$ and $\beta_1$ by minimizing $\sum_{i=1}^n |y_i - \beta_0 - \beta_1 x_i^*|$:
\[ \min_{\beta_0, \beta_1} \sum_{i=1}^n \max\{y_i - \beta_0 - \beta_1 (1+\zeta_i) x_i, \beta_0 + \beta_1 (1+\zeta_i) x_i - y_i \}.     \]
Because we do not know the values $\zeta_i$, we can apply RO:
\begin{align*}
\min_{\beta_0, \beta_1} \max_{\zeta \in \Zset} \sum_{i=1}^n \max\{y_i - \beta_0 - \beta_1 (1+\zeta_i) x_i, \beta_0 + \beta_1 (1+\zeta_i) x_i - y_i \},
\end{align*}
where we choose the uncertainty region $\Zset$ to be ellipsoidal: $\Zset = \{ \zeta \in \R^n : \twonorm{ \zeta } \leq \Omega \}$. Note that this is the second special cases in Section \ref{sec:specialstructure} because it is the sum of absolute values, $\zeta_i$ appears only in term $i$, and the uncertainty region is centrosymmetric around $\zeta=0$. It follows that the RC can be written as:
\begin{align}
\min_{\beta_0, \beta_1} \sum_{i=1}^n \max\{y_i - \beta_0 - \beta_1 x_i, \beta_0 + \beta_1 x_i - y_i \} + \Omega \twonorm{\beta_1 x}, \label{ex-reg-true}
\end{align}
which can be reformulated as a CQP. Even though there is this explicit and simple formulation of the exact RC, we also try the other approaches from Section \ref{sec:approaches} for a comparison. The RC-R is as follows:
\begin{align*}
\mbox{(RC-R)} \qquad
\min  			\qquad &  \sum_{i=1}^n z_i  \\
\mbox{s.t.} \qquad &  z_i \geq  y_i - \beta_0 - \beta_1 x_i + \Omega |\beta_1 x_i| && \forall i \in I \\
                   &  z_i \geq  \beta_0 + \beta_1 x_i - y_i + \Omega |\beta_1 x_i| && \forall i \in I,
\end{align*}
which for comparison with formulation \eqref{ex-reg-true} can also be written as:
\begin{align*}
\min_{\beta_0, \beta_1} \sum_{i=1}^n \max\{y_i - \beta_0 - \beta_1 x_i, \beta_0 + \beta_1 x_i - y_i \} + \Omega \onenorm{\beta_1 x}.
\end{align*}
This is more pessimistic than constraint \eqref{ex-reg-true}, since $\onenorm{\cdot} \geq \twonorm{\cdot}$. For the RC-R, the worst case realization in the ellipsoidal set has all but one elements equal to 0. Therefore, it may seem that the RC-R is the RC of a linear constraint with interval uncertainty, but this is not the case. The AARC-R is given by:
\begin{align*}
\min  			\qquad	&  d                                                                     && \\
\mbox{s.t.} \qquad	&  d \geq \sum_{i=1}^n v_i + w_i \transp \zeta                           && \forall \zeta \in \R^n : \twonorm{\zeta} \leq \Omega \\
                    &  v_i + w_i \transp \zeta \geq  y_i - \beta_0 - \beta_1 (1+\zeta_i) x_i && \forall \zeta \in \R^n : \twonorm{\zeta} \leq \Omega \quad \forall i \in I \\
                    &  v_i + w_i \transp \zeta \geq  \beta_0 + \beta_1 (1+\zeta_i) x_i - y_i && \forall \zeta \in \R^n : \twonorm{\zeta} \leq \Omega \quad \forall i \in I,
\end{align*}
which after reformulation becomes:
\begin{align*}
\mbox{(AARC-R)} \qquad
&\min        \qquad  &&  d            && \\
&\mbox{s.t.} \qquad  &&  d \geq \Omega \twonorm{w} + \sum_{i=1}^n v_i                       && \\
&                    &&  v_i \geq  y_i - \beta_0 - \beta_1 x_i + \Omega \twonorm{\beta_1 x_i e_i + w_i} && \forall i \in I  \\
&                    &&  v_i \geq  \beta_0 + \beta_1 x_i - y_i + \Omega \twonorm{\beta_1 x_i e_i - w_i} && \forall i \in I,
\end{align*}
where $e_i$ is the $i^{th}$ unit vector.

We have run these four models on $1,000$ cases. In each case, we fixed the parameters at $\beta_0 = 2$, $\beta_1 = 5$ and $\sigma^2 = 1$. A case consists of 15 observations of $x_i$, generated uniformly in $[0, 100]$ i.i.d., $\zeta \in \R^n : \twonorm{\zeta} \leq \Omega$ uniformly and $\varepsilon_i \sim N(0,\sigma^2)$ i.i.d. Using these random draws, we have computed $y_i = \beta_0 + \beta_1 (1+\zeta_i) x_i + \varepsilon_i$. For the uncertainty region, we have picked $\Omega = 0.05$. For each of this cases, we solved the exact formulation \eqref{ex-reg-true}, AARC-R, QARC-R, RC-R, and the nominal problem using CVX and SDPT3. All solution times are very low, and hence, not mentioned.

Next to comparing the usual way by means of $v_{true}$, we also compare solutions by looking at how well $\beta_0$ and $\beta_1$ are estimated. In all $1,000$ cases, the AARC-R and QARC-R give the same estimates of $\beta_0$ and $\beta_1$. Hence, they are considered equal. Histograms, the mean, and $s^2$ statistics for $\beta_0$, $\beta_1$ and $v_{true}$ are shown in Figure \ref{fig:ex-regressie-hist}. All models do well in estimating the parameters $\beta_0$ and $\beta_1$, except the AARC-R and the QARC-R for $\beta_0$. 

The objective values (averaged over the $1,000$ cases) are listed in Table \ref{tbl:regression}. The AARC-R and QARC-R objective values are 22.7\% respectively 94.4\% closer to the optimum than the RC-R objective value. However, for the true value we have $v_{true}(x_{RC-R})=92.095$ while $v_{true}(x_{AARC-R})=v_{true}(x_{QARC-R})=99.322$. The RC-R outperforms the AARC-R and the QARC-R in 98\% of the cases, and even the nominal solution outperforms the RC-R in 99\% of the cases. So, even though the AARC-R and QARC-R provide a much better bound on the optimal true value, their true value is not necessarily better. This shows two things: it may be very misleading to compare robust solutions by their objective values, and using a better approximation of the true objective function might not improve the solution at all.

While we were able to solve the exact RC \eqref{ex-reg-true} directly, we have also looked at the efficiency of the cutting plane methods. This comparison is not based on the $1,000$ cases mentioned earlier, but on new cases where we varied the dimension of $\zeta$ (and consequently, the dimension of $x$). Each case was created in the same way as described before. Algorithms \ref{alg:itervertex} and \ref{alg:iterrlc} can run very quickly because the worst case $\zeta$ can be computed efficiently, as this is a simple case. It is therefore interesting to look at the number of iterations the algorithms take. We have generated 1,000 data sets in which we varied the number of observations. Algorithm \ref{alg:itervertex} adds only $3$ or $4$ constraints, independent of the number of observations. This shows that the method is not only effective for polyhedral uncertainty regions. The number of iterations of Algorithm \ref{alg:iterrlc} is shown in Figure \ref{fig:ex-regressie-numiter}, where it seems to be a square root function of the number of observations. The regression model $numiter_i = \alpha \sqrt{ numobs_i } + \varepsilon_i$ gives $\alpha=1.258$ with a standard error of $0.007$ and $R^2 = 0.966$. For 200 observations, the algorithm generates at most $18$ constraints while full EORLC would result in a model with $2^{200}$ constraints. The algorithm needs more iterations than the vertex enumeration algorithm, but is still effective.

\begin{table}
\caption{A comparison of different solution methods for the regression example averaged over $1,000$ datasets.\label{tbl:regression}}
{\begin{tabular}{lrr}
\toprule
Method                                 & $v_{method}$  & $v_{true}(x_{method})$ \\
\midrule
Nominal                                &  $36.326$ & $91.994$ \\
RC-R                                   & $224.096$ & $92.095$ \\
AARC-R                                 & $194.091$ & $99.322$ \\
QARC-R                                 &  $99.323$ & $99.322$ \\
Exact                                  &  $91.973$ & $91.973$ \\
\bottomrule
\end{tabular}}
\end{table}

\begin{figure}
	\centering
		\includegraphics[trim={3.2cm 1.5cm 3.1cm 1.7cm},clip,scale=0.6]{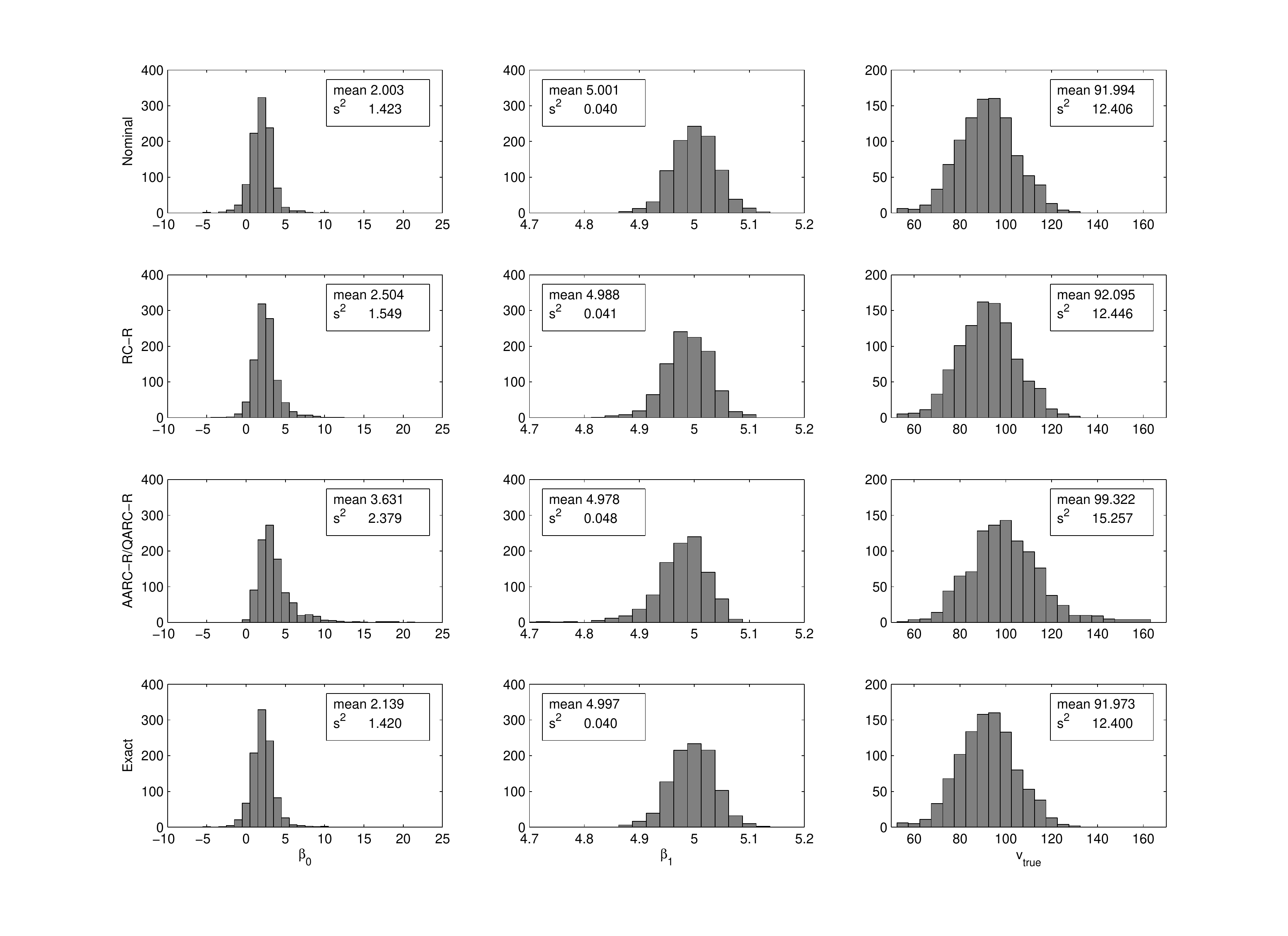}
	\caption{Comparison of the nominal solution, the RC-R solution, the AARC-R/QARC-R solution and the exact solution for the regression model of Section \ref{sec:exreg}. }
	\label{fig:ex-regressie-hist}
\end{figure}

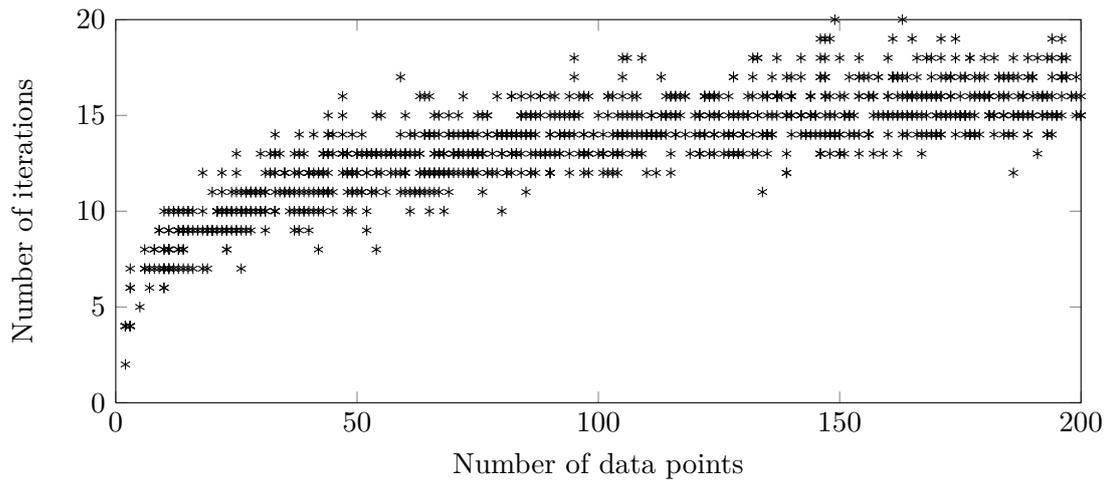
\begin{figure}
	\centering
		\begin{tikzpicture}

\begin{axis}[%
width=5in,
height=2in,
scale only axis,
xmin=0,
xmax=200,
xtick = {0,50,100,150,200},
xlabel={Number of data points},
ymin=0,
ymax=20,
ylabel={Number of iterations}
]
\addplot [
color=black,
only marks,
mark=asterisk,
mark options={solid},
forget plot
]
table[row sep=crcr]{
73 13\\
154 17\\
192 16\\
26 10\\
43 11\\
63 12\\
13 8\\
147 15\\
128 15\\
38 13\\
28 9\\
33 13\\
36 11\\
60 13\\
30 10\\
132 17\\
137 16\\
60 12\\
93 15\\
62 11\\
192 16\\
107 15\\
101 12\\
28 11\\
136 15\\
82 14\\
148 13\\
169 18\\
63 12\\
30 13\\
60 13\\
74 13\\
116 16\\
45 14\\
146 13\\
147 17\\
50 12\\
174 15\\
135 13\\
102 16\\
113 14\\
171 16\\
190 17\\
95 15\\
59 11\\
36 10\\
54 12\\
82 13\\
35 12\\
137 16\\
27 11\\
170 14\\
15 10\\
100 13\\
51 14\\
101 13\\
13 9\\
47 13\\
9 9\\
92 13\\
20 11\\
160 18\\
8 8\\
35 12\\
52 9\\
188 16\\
47 13\\
75 15\\
179 14\\
84 13\\
163 14\\
196 18\\
71 13\\
38 12\\
12 10\\
69 15\\
140 16\\
94 16\\
161 17\\
196 17\\
121 13\\
86 13\\
181 15\\
122 16\\
18 7\\
139 12\\
178 14\\
113 14\\
177 16\\
167 13\\
72 16\\
88 16\\
173 17\\
53 13\\
172 17\\
25 11\\
15 10\\
83 14\\
97 12\\
153 14\\
154 13\\
67 12\\
167 14\\
20 9\\
41 12\\
149 15\\
186 17\\
33 11\\
151 15\\
52 12\\
134 14\\
132 14\\
54 12\\
132 16\\
193 14\\
49 14\\
191 16\\
126 14\\
184 17\\
22 9\\
26 10\\
173 16\\
74 12\\
169 17\\
185 17\\
66 11\\
47 13\\
175 16\\
45 10\\
11 9\\
178 16\\
147 16\\
116 15\\
86 16\\
129 13\\
154 17\\
82 16\\
107 14\\
18 9\\
191 13\\
106 14\\
88 16\\
47 13\\
140 15\\
156 14\\
146 14\\
196 17\\
189 15\\
135 15\\
55 12\\
128 15\\
156 14\\
12 7\\
164 16\\
84 14\\
128 17\\
171 16\\
130 14\\
160 16\\
48 12\\
198 16\\
74 12\\
148 19\\
174 14\\
104 15\\
86 14\\
113 15\\
105 13\\
153 15\\
7 6\\
199 17\\
95 17\\
90 15\\
192 16\\
123 15\\
79 12\\
176 17\\
186 12\\
68 12\\
150 18\\
127 13\\
117 16\\
95 18\\
67 11\\
20 9\\
130 14\\
156 16\\
82 13\\
63 11\\
191 18\\
124 14\\
168 16\\
176 15\\
112 14\\
164 14\\
57 13\\
14 9\\
49 10\\
72 12\\
57 13\\
121 15\\
127 14\\
184 15\\
130 15\\
33 10\\
77 14\\
193 16\\
146 18\\
31 10\\
39 10\\
77 12\\
43 13\\
21 9\\
69 14\\
81 14\\
186 18\\
182 15\\
194 17\\
104 14\\
200 15\\
178 15\\
161 15\\
199 16\\
43 12\\
23 9\\
43 12\\
105 18\\
200 16\\
98 13\\
152 15\\
98 13\\
92 14\\
177 18\\
109 14\\
186 14\\
166 18\\
11 8\\
54 13\\
54 11\\
9 7\\
55 15\\
176 14\\
150 13\\
60 15\\
47 14\\
190 16\\
94 13\\
40 12\\
194 19\\
188 16\\
142 17\\
42 11\\
160 16\\
31 11\\
23 8\\
130 13\\
29 10\\
127 14\\
123 16\\
73 12\\
189 17\\
88 13\\
170 16\\
181 16\\
62 13\\
90 12\\
28 11\\
13 10\\
61 14\\
168 18\\
119 13\\
142 15\\
137 16\\
118 16\\
97 16\\
98 16\\
26 11\\
178 14\\
117 15\\
25 9\\
121 13\\
83 13\\
143 14\\
44 13\\
42 8\\
101 13\\
27 10\\
84 15\\
91 13\\
147 18\\
171 14\\
190 17\\
151 16\\
66 12\\
90 14\\
137 16\\
192 15\\
187 15\\
154 18\\
43 13\\
28 10\\
30 11\\
80 13\\
70 13\\
195 15\\
164 17\\
116 14\\
84 15\\
114 15\\
195 15\\
102 16\\
65 11\\
69 12\\
72 14\\
5 5\\
151 15\\
161 17\\
37 9\\
99 14\\
150 16\\
47 12\\
76 13\\
115 15\\
144 14\\
16 7\\
41 10\\
33 10\\
17 9\\
37 13\\
28 10\\
157 16\\
55 13\\
54 13\\
48 11\\
138 16\\
53 11\\
199 15\\
144 15\\
131 15\\
108 16\\
146 13\\
114 16\\
21 10\\
168 14\\
40 9\\
11 7\\
24 11\\
196 19\\
144 16\\
67 12\\
139 15\\
50 13\\
140 17\\
190 15\\
103 12\\
45 11\\
63 11\\
122 14\\
76 11\\
33 12\\
188 16\\
95 15\\
42 12\\
14 8\\
35 12\\
98 13\\
161 17\\
48 12\\
37 10\\
10 7\\
172 15\\
67 13\\
34 11\\
52 13\\
97 13\\
193 18\\
11 7\\
6 7\\
64 12\\
157 13\\
127 15\\
39 11\\
172 15\\
181 15\\
78 12\\
161 19\\
162 15\\
200 15\\
167 15\\
133 15\\
22 12\\
44 11\\
52 12\\
44 12\\
90 12\\
81 12\\
64 16\\
133 18\\
74 13\\
188 16\\
8 7\\
40 12\\
66 13\\
80 14\\
59 13\\
11 10\\
84 12\\
121 15\\
75 14\\
149 20\\
187 15\\
13 9\\
64 12\\
25 10\\
15 9\\
27 11\\
24 10\\
111 15\\
128 14\\
23 10\\
196 17\\
177 14\\
82 13\\
87 14\\
41 11\\
193 15\\
75 13\\
67 13\\
152 15\\
103 13\\
86 14\\
31 11\\
97 13\\
33 10\\
60 13\\
123 15\\
14 9\\
137 16\\
48 13\\
105 17\\
118 14\\
56 13\\
197 17\\
34 13\\
133 15\\
137 16\\
97 14\\
37 12\\
66 15\\
38 11\\
92 15\\
11 8\\
40 10\\
53 13\\
185 14\\
154 14\\
135 14\\
124 14\\
179 17\\
194 18\\
163 17\\
42 12\\
10 7\\
173 17\\
176 15\\
112 12\\
39 13\\
176 17\\
74 13\\
189 14\\
56 11\\
19 9\\
164 16\\
129 13\\
11 8\\
101 13\\
58 13\\
79 14\\
104 13\\
142 15\\
73 14\\
183 16\\
124 16\\
81 14\\
10 8\\
82 16\\
170 15\\
16 10\\
105 14\\
114 15\\
197 17\\
11 7\\
175 15\\
169 14\\
23 9\\
110 14\\
60 12\\
181 16\\
73 14\\
152 15\\
87 13\\
61 10\\
13 9\\
160 15\\
180 16\\
160 14\\
168 16\\
154 16\\
102 15\\
169 16\\
166 14\\
62 11\\
162 13\\
103 13\\
135 14\\
120 14\\
54 13\\
43 13\\
31 10\\
149 14\\
94 15\\
77 14\\
69 11\\
148 14\\
20 9\\
140 15\\
142 15\\
76 13\\
77 15\\
31 12\\
186 15\\
105 16\\
93 14\\
166 17\\
159 15\\
167 16\\
33 14\\
52 10\\
162 15\\
32 12\\
52 13\\
27 11\\
14 9\\
133 13\\
73 14\\
10 10\\
111 14\\
196 17\\
36 11\\
31 9\\
68 13\\
177 18\\
71 13\\
65 16\\
70 13\\
193 14\\
173 17\\
80 14\\
177 15\\
25 12\\
86 15\\
162 17\\
8 8\\
181 15\\
148 16\\
152 14\\
133 18\\
173 15\\
66 13\\
26 9\\
49 12\\
171 17\\
162 17\\
48 13\\
111 15\\
180 17\\
91 16\\
181 16\\
92 14\\
189 18\\
23 10\\
50 13\\
37 11\\
24 10\\
182 18\\
27 9\\
132 13\\
92 13\\
100 15\\
51 11\\
50 12\\
83 14\\
68 10\\
90 12\\
65 12\\
43 11\\
107 16\\
145 14\\
85 14\\
71 14\\
74 12\\
125 16\\
156 16\\
122 14\\
146 19\\
22 9\\
38 10\\
12 7\\
116 16\\
150 14\\
126 13\\
21 10\\
48 13\\
103 16\\
115 13\\
40 12\\
33 13\\
104 12\\
61 13\\
79 14\\
49 11\\
197 18\\
165 16\\
106 18\\
22 11\\
120 15\\
185 16\\
144 16\\
174 18\\
14 8\\
99 14\\
137 15\\
98 13\\
64 14\\
149 14\\
188 15\\
184 15\\
192 16\\
69 14\\
167 16\\
44 14\\
110 14\\
59 14\\
184 16\\
139 17\\
67 14\\
96 16\\
30 11\\
157 16\\
135 16\\
57 13\\
131 16\\
30 10\\
163 15\\
85 11\\
77 12\\
51 13\\
148 15\\
77 15\\
134 14\\
183 17\\
86 12\\
19 7\\
78 13\\
124 16\\
44 11\\
10 6\\
65 14\\
183 17\\
22 10\\
71 13\\
54 8\\
139 15\\
19 9\\
104 13\\
93 14\\
52 13\\
26 10\\
13 9\\
174 19\\
158 17\\
187 16\\
85 14\\
140 16\\
68 12\\
123 13\\
195 15\\
148 14\\
124 16\\
27 10\\
74 14\\
181 15\\
193 16\\
44 11\\
105 15\\
45 13\\
175 15\\
66 12\\
75 13\\
147 16\\
94 12\\
88 15\\
82 12\\
144 15\\
144 16\\
166 15\\
33 12\\
97 16\\
35 11\\
128 14\\
10 6\\
151 13\\
87 12\\
185 17\\
108 14\\
108 15\\
111 14\\
76 15\\
113 17\\
25 9\\
124 15\\
109 15\\
18 12\\
114 14\\
165 19\\
180 16\\
49 11\\
9 9\\
175 17\\
118 16\\
121 13\\
140 16\\
59 12\\
171 18\\
22 10\\
25 13\\
146 13\\
68 13\\
146 17\\
134 16\\
165 16\\
71 15\\
170 15\\
131 15\\
75 12\\
148 14\\
10 8\\
125 15\\
190 17\\
154 16\\
96 13\\
128 17\\
142 18\\
194 15\\
182 15\\
97 13\\
108 13\\
125 15\\
127 16\\
170 16\\
103 14\\
85 15\\
194 14\\
67 12\\
124 14\\
40 12\\
87 15\\
137 18\\
65 14\\
14 7\\
90 13\\
152 17\\
196 17\\
165 15\\
150 13\\
63 13\\
38 12\\
71 12\\
47 16\\
142 15\\
48 12\\
89 13\\
144 16\\
64 14\\
135 14\\
98 12\\
115 16\\
130 14\\
35 11\\
165 16\\
47 15\\
48 10\\
194 15\\
57 13\\
122 16\\
116 14\\
67 15\\
123 16\\
134 11\\
94 14\\
22 10\\
140 14\\
49 12\\
161 17\\
32 13\\
107 13\\
176 15\\
95 14\\
182 14\\
60 11\\
164 15\\
175 15\\
196 18\\
146 17\\
76 13\\
53 11\\
171 15\\
14 9\\
65 12\\
63 13\\
135 16\\
189 17\\
144 15\\
11 8\\
86 12\\
184 15\\
41 14\\
29 11\\
23 9\\
136 17\\
13 7\\
24 9\\
73 14\\
28 11\\
33 10\\
18 10\\
42 11\\
128 13\\
19 9\\
81 12\\
179 16\\
106 15\\
44 13\\
147 19\\
158 17\\
156 14\\
51 11\\
70 14\\
64 12\\
113 14\\
15 9\\
121 15\\
187 17\\
44 15\\
166 16\\
157 15\\
29 11\\
136 14\\
7 7\\
53 13\\
44 14\\
194 16\\
155 17\\
164 16\\
63 16\\
15 7\\
186 15\\
121 14\\
150 14\\
79 14\\
184 16\\
128 13\\
129 15\\
139 13\\
80 10\\
96 15\\
21 10\\
65 10\\
38 9\\
71 13\\
164 15\\
173 16\\
140 16\\
156 17\\
115 12\\
137 16\\
10 7\\
170 14\\
60 12\\
200 15\\
101 14\\
42 12\\
64 11\\
135 14\\
145 15\\
113 14\\
68 13\\
184 14\\
13 10\\
57 12\\
109 18\\
61 11\\
13 8\\
169 16\\
39 10\\
6 8\\
86 15\\
91 15\\
188 15\\
105 14\\
171 14\\
82 14\\
16 9\\
21 10\\
139 12\\
109 13\\
57 13\\
59 17\\
192 15\\
198 16\\
43 12\\
38 14\\
122 16\\
54 15\\
14 10\\
196 15\\
112 14\\
189 14\\
6 7\\
66 14\\
113 15\\
163 20\\
90 14\\
50 11\\
165 15\\
178 17\\
110 12\\
122 14\\
83 13\\
42 10\\
20 9\\
145 15\\
168 15\\
40 11\\
115 16\\
80 13\\
190 15\\
147 14\\
97 14\\
11 9\\
162 16\\
196 17\\
143 14\\
160 15\\
60 12\\
90 16\\
178 16\\
22 10\\
66 12\\
48 12\\
116 15\\
139 15\\
159 13\\
171 19\\
147 16\\
177 14\\
58 12\\
70 12\\
111 14\\
184 15\\
105 14\\
72 13\\
122 16\\
180 18\\
142 14\\
102 12\\
136 14\\
163 16\\
84 16\\
104 14\\
84 15\\
23 8\\
173 16\\
62 14\\
132 18\\
179 16\\
196 16\\
147 17\\
43 10\\
173 17\\
89 13\\
124 13\\
69 12\\
26 7\\
158 15\\
79 16\\
70 13\\
108 13\\
11 8\\
114 15\\
3 4\\
3 4\\
3 7\\
2 4\\
2 4\\
3 4\\
2 4\\
3 6\\
2 4\\
3 4\\
2 4\\
3 4\\
2 4\\
2 2\\
3 6\\
};
\end{axis}
\end{tikzpicture}%
	\caption{The number of iterations needed by Algorithm \ref{alg:iterrlc} to find an optimal solution versus the number of observations in the regression model of Section \ref{sec:exreg}. }
	\label{fig:ex-regressie-numiter}
\end{figure}

\subsection{Brachytherapy}
High dose rate brachytherapy (HDR-BT) is a form of radiation therapy where a highly radioactive sealed source is inserted into a tumor for short time periods via approximately fifteen till twenty catheters. When the catheter positions are fixed, a treatment plan specifies for how long the radioactive source has to stay at which position inside the catheters. A perfect treatment plan delivers a prescribed dose to the tumor while not delivering any dose to the surrounding organs at risk. Because this is physically impossible, the goal is to find a good trade-off between irradiating the tumor and saving the organs at risk. The quality of a plan can be measured by means of calculation points. These are artificial points where the received dose can be computed and compared to a prescribed lower and upper bound, that are distributed inside and around the tumor and organs at risk. The dose in a specific calculation point $i$ can be computed as the sum of the individual contributions from every catheter $k$, which in turn is the sum of the dwell times of the individual dwell positions inside catheter $k$ multiplied by given dose rate vectors $d_{ik}$:

\[ \sum_{k \in K} d_{ik}\transp t_k. \]
If calculation point $i$ fails to satisfy the lower bound $L_i$ or the upper bound $U_i$, it contributes a linear penalty of $\alpha_i$ or $\beta_i$ (respectively) per unit of violation to the objective function. This results in the following optimization problem:

\[ \min_{t_k \in \R^n_+} \sum_{i \in I} \max \{0,	\alpha_i (L_i - \sum_{k \in K} d_{ik}\transp t_k), \beta_i (\sum_{k \in K} d_{ik}\transp t_k - U_i) \}. \]
This convex piecewise linear objective function is commonly used for treatment planning (\cite{Alterovitz2006,Karabis2009,Lessard2001}).

The parameters $d_{ik}$ are computed based on the catheter positions. These positions cannot be measured accurately, hence the data in the optimization problem is uncertain. We assume that the true catheter position is within some cone around the measured position, which is justified because one side of the catheter is fixed at a known position. We replace the cone with a polyhedral cone with a 10-sided base to end up with a polyhedral set, and we make the simplification that the vector $d_{ik}$ inside the cone is a convex combination of the vectors $d_{ik}$ at the sides of the cone. Thus, we only need to know the vectors $d_{ik}$ at the catheter positions corresponding to the $10$ edges of the cone connecting the apex to the base. Using these vectors as the columns of a matrix $B$, we can write $d_{ik} = B_{ik} \zeta_k$, where $\zeta_k \in \Delta^{|S|-1}$ ($\Delta^{|S|-1} = \{\zeta \in \R^{|S|} : e \transp \zeta = 1, \zeta \geq 0\}$, the standard simplex in $\R^{|S|}$), and $|S|$ is the number of sides of the base of the polyhedral cone. This gives the following RO problem:
\begin{align*}
\min  			\qquad	&v \\
\mbox{s.t.} \qquad	& \sum_i \max \{0, \alpha_i (L_i - \sum_{k \in K} (B_{ik} \zeta_k)\transp t_k), \\
                    & \hspace{3cm}  \beta_i (\sum_{k \in K} (B_{ik} \zeta_k)\transp t_k - U_i) \} \leq v && \forall \zeta_k \in \Delta^{|S|-1} (k \in K) \\
                    &t_{k}\geq 0                                          && \forall k\in K.
\end{align*}

The number of calculation points is usually in the order of magnitude of 5,000. We have data of one patient that has been exported from treatment planning software with a large number of calculation points. For the purpose of this paper, we have reduced the number of calculation points to $40$ by taking a random subset, because otherwise both the maximization step in Algorithm \ref{alg:iterrlc} and the AARC-R are intractable. The model does not lose its value from this reduction, because the objective could be a weighted average between the nominal objective based on all calculation point, and the robust objective based on a small fraction of calculation points that is well distributed inside the tumor.

The RC-R is based on the original constraint with the sum of $40$ maxima of $3$ functions. We have also solved the RC-R and AARC-R after splitting up the sum, as outlined in the last paragraph of Section \ref{sec:approx}. If we split the sum in groups of size 4, then the number of sums reduces to 10 while each term of sum is the maximum of $3^4$ functions.

The results for the different methods are listed in Table \ref{tbl:brachy}. The first observation is that the nominal solution has a true value that is five times the optimal value, so the nominal solution is nonrobust. The AARC-R has value $\approx 143$, which is much closer to the true value ($\approx 137$) than to the RC-R value ($\approx 229$), so the AARC-R almost closes the gap. Algorithm \ref{alg:iterrlc} is outperformed by Algorithm \ref{alg:itervertex}. Its long running time when the optimization problem for determining $v_{true}$ is not solved to optimality is not only due to the larger number of iterations, but more importantly, to the gradually increasing running time of solving $LP$ in each iteration. This step becomes so memory consuming, that the algorithm cannot finish with CPLEX on 32 bit hardware. Combining both algorithms reduces the number of iterations, as more work per iteration is done, but also increases the solution time in case the optimization problem for determining $v_{true}$ is not solved to optimality. Splitting up the sum does not give any benefit in terms of true value, though it gives a lower objective value when solving the RC-R.

\begin{table}
\caption{A comparison of different solution methods for the brachytherapy example.\label{tbl:brachy}}
{\begin{tabular}{lrrrr}
\toprule
Method                                   & Iterations & Sol. time (s) & $v_{method}$  & $v_{true}(x_{method})$ \\
\midrule
Nominal                                  &    $-$ &       $0$ &  $11.033$ & $711.362$ \\
RC-R                                     &    $-$ &       $1$ & $229.986$ & $157.277$ \\
AARC-R                                   &    $-$ &     $314$ & $143.104$ & $140.827$ \\[6pt]
RC-R (sum of 8 maxima of 243 functions)  &    $-$ &      $51$ & $186.219$ & $157.568$ \\
RC-R (sum of 10 maxima of 81 functions)  &    $-$ &      $17$ & $203.100$ & $157.030$ \\
RC-R (sum of 20 maxima 9 functions)      &    $-$ &       $2$ & $218.894$ & $158.438$ \\
AARC-R (sum of 10 maxima of 81 functions &    $-$ &  $136214$ & $141.481$ & $140.114$ \\
AARC-R (sum of 20 maxima 9 functions)    &    $-$ &    $2774$ & $142.447$ & $139.957$ \\[6pt]
Algorithm \ref{alg:itervertex}           &   $25$ &   $1,173$ & $137.628$ & $137.676$ \\
Algorithm \ref{alg:iterrlc}              &   $36$ &   $1,065$ & $137.629$ & $137.662$ \\
Combined                                 &   $20$ &     $894$ & $137.628$ & $137.650$ \\[6pt]
Algorithm \ref{alg:itervertex}$^\ast$    &   $53$ &     $234$ & $137.627$ & $137.668$ \\
Algorithm \ref{alg:iterrlc}$^\ast$       & $>400$ & $>50,000$ &       $-$ &       $-$ \\
Combined$^\ast$                          &   $41$ &     $353$ & $137.628$ & $137.676$ \\
\bottomrule
\end{tabular}}
{\scriptsize $\varepsilon=0.05$} \\
{\scriptsize $^\ast$ The optimization problem for determining $v_{true}$ is stopped as soon as the upper bound is at least $0.05$ higher than the lower bound }
\end{table}
\subsection{Inventory planning}
We consider a single item inventory model where backlogging is allowed to compare the AARC-R with the exact RC. At the beginning of each period, an order can be placed that is delivered instantly. At the end of each period, the holding and backlogging costs are $c_h$ and $c_b$ per unit, respectively. The objective is to minimize the costs:
\begin{align*}
\min_{q,w \in \R_+^{|T|}} \max_{d \in \Zset} \left(
		 \sum_{t=1}^{|T|} \max\{ c_h [x_0 + \sum_{i=1}^t q_i - d_i], c_b [x_0 + \sum_{i=1}^t q_i - d_i] \}
		 \right),
\end{align*}
where $x_0$ is the starting inventory, $q_i$ is the order quantity at time $i$, and $d_i$ is the uncertain demand at time $i$. In all formulations we allow $q_i$ to depend affinely on the demand in periods $1$ up to $i-1$.

If the uncertainty region is a box, the AARC-R turns out to be exact for the intervals we have tried. This is in accordance with the numbers reported by \cite{BenTal:2005:retailer}. We found that the AARC-R is no longer exact if the uncertainty region is an ellipsoid. Because demand is nonnegative, the ellipsoid is intersected with the nonnegative orthant:
\[ \Zset = \{ d \in \R^{|T|}_{+} : \twonorm{d-\bar{d}} \leq \Omega \}. \]
In our numerical study we have looked at 12 time periods, with parameters $\Omega = 10$, $\bar{d}=5$, $c_h=1$, and $c_b=2$.

Each determination of $v_{true}$ in Algorithm \ref{alg:iterrlc} takes up to $2$ minutes if continued to optimality using CPLEX, making it the most time consuming step in the algorithm. Because the dimension of the uncertain demand vector is $12$, a global solver might be faster. We have tried LGO 1.0, whose accuracy can be adjusted with the parameters ``maximal number of function evaluations" and ``maximal number of stalled evaluations", which initially both are 16,000. Every time the upper bound found by LGO is less than $0.1$ larger than the lower bound, the parameters are increased by 25\% until they exceed 1,000,000. This reflects the idea that it is still easy to find a violated constraint when the algorithm starts, but gradually becomes more difficult as the quality of the solution increases. Still, CPLEX finds better solutions in less time.

The RC-R is based on the original constraint with the sum of $12$ maxima of $2$ functions. We have also solved the RC-R and AARC-R after splitting up the sum, as outlined in the last paragraph of Section \ref{sec:approx}. In this example the uncertainty enters the model through time, so the most natural way of splitting up the summation is in consecutive time periods. We split the problem into two groups:
\begin{align*}
\min  			\qquad	&y_1 + y_2 \\
\mbox{s.t.} \qquad	&\sum_{t=1}^{6}  \max\{ c_h [x_0 + \sum_{i=1}^t q_i - d_i], c_b [x_0 + \sum_{i=1}^t q_i - d_i] \leq y_1  && \forall d \in \Zset \\
                    &\sum_{t=7}^{12} \max\{ c_h [x_0 + \sum_{i=1}^t q_i - d_i], c_b [x_0 + \sum_{i=1}^t q_i - d_i] \leq y_2  && \forall d \in \Zset,
\end{align*}
where again we allow $q_i$ to depend affinely on the demand in periods $1$ up to $i-1$. This reformulation introduces more constraints with the sum of maxima, but each sum contains less terms, hopefully resulting in a shorter solution time. The worst case $d$ may differ for the constraints that set $y_1$ and $y_2$, hence this reformulation is not exact.

\begin{table}
\caption{A comparison of different solution methods for the inventory example.\label{tbl:inventory}}
{\begin{tabular}{lrrrr}
\toprule
Method                                              & Iterations & Sol. time (min) & $v_{method}$  & $v_{true}(x_{method})$ \\
\midrule
Nominal                                             &   $-$ &   $0$ &   $0.000$ & $509.903$ \\
RC-R                                                &   $-$ &   $0$ & $120.000$ &  $94.641$ \\
AARC-R                                              &   $-$ &   $0$ & $120.000$ &  $93.315$ \\[6pt]
EORLC                                               &   $-$ &  $10$ &  $48.750$ &  $48.750$ \\
RC-R (sum of 2 maxima of 64 functions)              &   $-$ &   $0$ &  $68.613$ &  $54.162$ \\
RC-R (sum of 3 maxima of 16 functions)              &   $-$ &   $0$ &  $83.631$ &  $58.140$ \\
RC-R (sum of 4 maxima of  8 functions)              &   $-$ &   $0$ &  $94.456$ &  $66.637$ \\
RC-R (sum of 6 maxima of  4 functions)              &   $-$ &   $0$ & $107.627$ &  $77.957$ \\ [6pt]
Algorithm \ref{alg:itervertex}                      & $123$ &  $76$ &  $48.749$ &  $48.796$ \\
Algorithm \ref{alg:iterrlc}                         &  $77$ &  $36$ &  $48.752$ &  $48.802$ \\
Combined                                            &  $79$ &  $45$ &  $48.755$ &  $48.798$ \\ [6pt]
Algorithm \ref{alg:itervertex}$^\ast$               & $309$ & $180$ &  $48.752$ &  $48.827$ \\
Algorithm \ref{alg:iterrlc}$^\ast$                  & $112$ &   $9$ &  $48.753$ &  $48.790$ \\
Combined $^\ast$                                    & $103$ &  $29$ &  $48.755$ &  $48.825$ \\
\bottomrule
\end{tabular}}
{\scriptsize $\varepsilon=0.1$} \\
{\scriptsize $^\ast$ The optimization problem for determining $v_{true}$ is stopped as soon as the upper bound is at least $0.1$ higher than the lower bound }
\end{table}

The results are listed in Table \ref{tbl:inventory}. Again note that the order policies ($q_i$) are adjustable in all formulations, including the RC-R, so the differences in this table are caused only by different reformulations of the sum of maxima. The first observation is that the AARC-R gives a very small gain over the RC-R, but still has almost twice the optimal value. So, making the analysis variables adjustable does not significantly improve the solution. Algorithm \ref{alg:iterrlc} is the fastest cutting plane method, requiring approximately the same number of iterations when combined with Algorithm \ref{alg:itervertex}. The sum splitting method significantly reduces the computation time at the cost of nonoptimality. It performs much better than the AARC-R, both in optimal value and in true value. Using the AARC-R on the splitted sums gives a very small gain over the RC-R on the splitted sums, just as for the full problem, so it is not listed in the table. The large true value of the nominal solution comes from the fact that the order sizes are fixed in advance and are not adjusted to observed demand.

\section{Conclusions}
\label{sec:conclusions}
Because RO is applied constraint-wise, it is very important how constraints are formulated. In this paper we list several approaches to an inequality constraint containing the sum of maxima of linear functions. The RC-R, often used in the literature, is the most pessimistic approach. It is obtained by first reformulating the deterministic constraint into linear constraints using analysis variables, and then applying RO. Its pessimism can be reduced by replacing the analysis variables with linear decision rules before applying RO, which gives the AARC-R. The AARC-R seems to work well for the practical problems we analyzed with polyhedral uncertainty regions, but we have constructed an example with polyhedral uncertainty where the value of the AARC-R is 100\% higher than the true value. Nonlinear decision rules may give better results, but are computationally more challenging. The conservatism of the approximations can be reduced by combining several $\max$ expressions before reformulating. Especially for ellipsoidal uncertainty this method gives much better solutions at the cost of a slightly higher solution time.

In many cases it is not necessary to use an approximation because an exact reformulation can be practically solved. We identify four special cases in which an exact reformulation is often tractable. For the general case we give two exact general methods: vertex enumeration and EORLC. Both methods may result in very large optimization models, but cutting plane methods can be used to handle this. Vertex enumeration adds a set of constraints for every vertex of the uncertainty region, so this method is preferred if the uncertainty region has a low number of vertices. Surprisingly, its cutting plane version is also capable of solving problems where the uncertainty region has an infinite number of extreme points efficiently. EORLC is preferred if the number of terms with a maximum function is low. If it is not clear in advance which cutting plane method is faster, both methods have to be tried because our numerical examples do not show a clear preference. Both methods can be combined, but we have not found a situation in which it is beneficial to do so. EORLC can be combined with approximations, which is a new idea that can be used for large scale problems which shows promising numerical results.

The RC-R is often used in the literature while less conservative approaches could have been applied, mostly without explicitly mentioning that their approach is conservative. In the paper by \cite{Kropat2010}, the exact method EORLC would have increased the number of constraints by only a factor four while reducing the number of variables with almost a factor 2 and not changing the structure of the problem. The same authors applied vertex enumeration to an ellipsoidal constraint with polyhedral uncertainty \citep{Ozmen2011}, which gives an exact reformulation. \cite{BertsimasThiele2006}, \cite{Wei2009} and \cite{JoseAlem2012139} apply the RC-R to a problem with polyhedral uncertainty. Our results, and also the numbers reported by \cite{BenTal:2005:retailer}, show that the AARC-R often gives less conservative solutions while it has the advantage that the problem remains linear. For inventory problems in general, when the order quantities are made adjustable, then oftenly also the analysis variables are made adjustable. We show that the latter is not beneficial for ellipsoidal uncertainty, and that both the RC-R and the AARC-R give very bad solutions. For small planning horizons, an exact method has to be used, while for larger horizons splitting the sum in small groups and applying EORLC on the groups significantly improves the solution. \cite{Ng2010557} solve a lot allocation problem with ellipsoidal uncertainty. Because the problem is computationally challenging, they solve a problem equivalent to our RC-R using Benders decomposition. Even though their problem is so challenging that even the simple RC-R cannot be solved within a day and their speed-ups are necessary to solve the problem efficiently, it is still interesting to know the conservatism of their approach. We have been able to get a suboptimal solution with cutting planes based on vertex enumeration, where both the minimization and the maximization step were stopped before optimality. The solution we got after four hours has a true value of 26.8, whereas the RC-R (which we tried to solve as an MILP) has a value between 28.7 and 47.2. So the objective value of the solution proposed by \cite{Ng2010557} is at least 7-76\% too pessimistic.

From our numerical examples it becomes clear that the RC-R is not necessarily better than the nominal problem. Neither of the two optimizes the true problem, so it cannot be determined a priori which one has a better true value. The same holds for the AARC-R and the RC-R: If the AARC-R gives a much lower value then at least it provides a guarantee on the worst case, but the RC-R may still outperform the AARC-R. When using an approximation, it is therefore crucial to measure its quality. This can be accomplished by comparing the true value of the solution of the approximation with the value of an exact formulation. If the problem is too large to be solved exactly, the comparison may be based on a smaller instance with similar structure.

\section*{Acknowledgments}
We would like to thank A. Ben-Tal from Technion (Haifa) and R. Sotirov from Tilburg University (The Netherlands) for their input. We thank M. Balvert from Tilburg University for the idea behind the reformulation in Appendix \ref{sec:deriv-marleen}. Moreover, we like to thank the reviewers for their comments that have significantly improved the quality of the paper.
\appendix
\section{Derivation of AARC-R using Fenchel's duality}
\label{sec:rewrite-fenchel}

In this appendix we apply Fenchel's duality to robust constraints, a technique introduced in RO by \cite{BenTal2011}. First we will briefly mention the general theory, then we will apply it to constraints of general form, and finally we will apply the general results to constraint \eqref{cons:general} and show that the result is the same as the AARC-R.

\addtocontents{toc}{\protect\setcounter{tocdepth}{1}}
\subsection{Fenchel's duality theorem}
\addtocontents{toc}{\protect\setcounter{tocdepth}{2}}
We start with some definitions that are necessary to formulate Fenchel's theorem:
\begin{definition}
A function $\phi$ is proper convex if it is convex, its codomain is $\R \cup \{\infty\}$, and $\phi(x) < \infty$ for at least one $x$.
\end{definition}
\begin{definition}
A function $\psi$ is proper concave if it is concave, its codomain is $\R \cup \{-\infty\}$, and $\psi(x) > -\infty$ for at least one $x$.
\end{definition}
\begin{theorem} (Fenchel's duality \cite[p.~327]{Rockafellar})
Let $\phi$ be a proper convex function on $\R^n$, let $\psi$ be a proper concave function on $\R^n$, and let either of the following conditions be satisifed:
\begin{itemize}
\item ri(dom $\phi$) $\cap$ ri(dom $\psi$) $\neq \emptyset$
\item $\phi$ and $\psi$ are closed, and ri(dom $\phi^*$) $\cap$ ri(dom $\psi_*$) $\neq \emptyset$,
\end{itemize}
where ri is the relative interior, dom is the effective domain (dom $\phi = \{x : \phi(x) < \infty\}$), and $\phi^*$ and $\psi_*$ are the convex and concave conjugate of $\phi$ and $\psi$, respectively. That is,
\[ \phi^*(s) = \sup_x \{s \transp x - \phi(x)\} \]
\[ \psi_*(s) = \inf_x \{s \transp x - \psi(x)\}. \]
Then the following equality holds
\[ \inf_x  \{ \phi(x) - \psi(x) \}   =   \sup_s \{ \psi_*(s) - \phi^*(s) \}. \]
\end{theorem}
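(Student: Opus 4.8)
The plan is to run the classical separation argument in $\R^{n+1}$, following Rockafellar. First I would establish weak duality, which is immediate: for any $x$ and any $s$ the definitions of the conjugates give $\phi(x)+\phi^*(s)\ge s\transp x$ and $\psi(x)+\psi_*(s)\le s\transp x$, and subtracting yields $\phi(x)-\psi(x)\ge\psi_*(s)-\phi^*(s)$; hence $\inf_x\{\phi(x)-\psi(x)\}\ge\sup_s\{\psi_*(s)-\phi^*(s)\}$. It then remains to prove the reverse inequality and, along the way, that the right-hand supremum is attained.

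Assume first the condition $\operatorname{ri}(\operatorname{dom}\phi)\cap\operatorname{ri}(\operatorname{dom}\psi)\ne\emptyset$, and set $\mu=\inf_x\{\phi(x)-\psi(x)\}$. If $\mu=-\infty$, weak duality already pins the right-hand side to $-\infty$, and $\mu$ cannot equal $+\infty$ since any common point $x_0$ of the two relative interiors has $\phi(x_0)<\infty$ and $\psi(x_0)>-\infty$; so the remaining case is $\mu$ finite. Here I would introduce the convex sets $C=\{(x,t)\in\R^{n+1}:t\ge\phi(x)\}$ (the epigraph of $\phi$) and $D=\{(x,t):t\le\psi(x)+\mu\}$ (the hypograph of $\psi+\mu$). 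Since $\phi\ge\psi+\mu$ everywhere, a point of $\operatorname{ri}C\cap\operatorname{ri}D$ would force $\phi(x)<t<\psi(x)+\mu$ with $x\in\operatorname{ri}(\operatorname{dom}\phi)\cap\operatorname{ri}(\operatorname{dom}\psi)$, which is impossible; so $\operatorname{ri}C\cap\operatorname{ri}D=\emptyset$ and the proper separation theorem supplies $(a,\lambda)\ne 0$ and $\alpha\in\R$ with $a\transp x+\lambda t\ge\alpha$ on $C$ and $a\transp x+\lambda t\le\alpha$ on $D$. Because $C$ is unbounded upward in $t$, this forces $\lambda\ge 0$, and $\lambda=0$ can be ruled out: it would make $\{x:a\transp x=\alpha\}$ a proper separating hyperplane for $\operatorname{dom}\phi$ and $\operatorname{dom}\psi$, contradicting the hypothesis. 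Normalizing $\lambda=1$ and putting $s=-a$, the inequality on $C$ becomes $\phi(x)\ge s\transp x+\alpha$ for all $x$, i.e.\ $\phi^*(s)\le-\alpha$, and the inequality on $D$ becomes $\psi(x)+\mu\le s\transp x+\alpha$ for all $x$, i.e.\ $\psi_*(s)\ge\mu-\alpha$; hence $\psi_*(s)-\phi^*(s)\ge\mu$, which combined with weak duality gives the equality and shows the supremum is attained at this $s$.

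For the second set of hypotheses I would reduce to the case just handled by conjugating. When $\phi$ and $\psi$ are closed and proper, $\phi^*$ is a closed proper convex function and $\psi_*$ a closed proper concave function, and the biconjugation theorem gives $(\phi^*)^*=\phi$ and $(\psi_*)_*=\psi$. Applying the first case to the pair $(\phi^*,\psi_*)$, whose domains have intersecting relative interiors by assumption, yields
\[ \inf_s\{\phi^*(s)-\psi_*(s)\}=\sup_y\{(\psi_*)_*(y)-(\phi^*)^*(y)\}=\sup_y\{\psi(y)-\phi(y)\}, \]
and negating both sides turns this into $\sup_s\{\psi_*(s)-\phi^*(s)\}=\inf_y\{\phi(y)-\psi(y)\}$, as required.

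The hard part will be the separation step: I need to verify carefully that $C$ and $D$ have disjoint relative interiors (so that \emph{proper} separation is applicable) and, more delicately, that the separating hyperplane is non-vertical, i.e.\ $\lambda\neq 0$. This last point is exactly where the relative-interior assumption is used, and the statement genuinely breaks down without some such regularity condition. By contrast, weak duality, the bookkeeping with the signs of $\lambda$ and $s$, and the biconjugation reduction for the second condition are all routine once the separating hyperplane has been produced.
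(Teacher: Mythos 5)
Your argument is correct and is essentially the classical proof from the cited source (Rockafellar, Theorem 31.1): weak duality from the two Fenchel inequalities, then proper separation of the epigraph of $\phi$ from the hypograph of $\psi+\mu$ in $\R^{n+1}$, with the relative-interior condition used exactly to exclude a vertical separating hyperplane, and the second condition handled by biconjugation. The paper itself states this theorem only as a quoted result without proof, so there is no alternative argument to compare against; your sketch correctly flags the one genuinely delicate point (properness of the separation and $\lambda\neq 0$), and the bookkeeping with $s=-a$, $\phi^*(s)\le-\alpha$, $\psi_*(s)\ge\mu-\alpha$ is right.
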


\subsubsection{Fenchel's duality applied to a robust constraint of general form.}

We focus on the general robust constraint
\begin{equation}\label{fench-prestart} g(\zeta,x) \leq d \qquad \forall \zeta \in \Zset, \end{equation}
where $g$ is a proper concave function of $\zeta$ for any fixed value of $x$, and the condition for Fenchel's duality is satisfied with respect to the first argument for any fixed value of $x$. Because values of $g$ are not of interest when $\zeta \notin \Zset$, we assume that $g(\zeta,x) = -\infty$ for all $\zeta \notin \Zset$. We also assume that $\Zset$ is a compact set so that this constraint is equivalent to:
\begin{equation}\label{fench-start} \max_{\zeta \in \R^L}   \{ g(\zeta,x) - \delta_\Zset(\zeta)   \} \leq d, \end{equation}
with $\delta_\Zset$ the indicator function ($\delta_\Zset(\zeta) = 0$ if $\zeta \in \Zset,$ and $\infty$ otherwise). We can rewrite the left-hand side by applying Fenchel's duality:
\begin{equation*} \min_{s \in \R^L}  \{  \delta_\Zset^*(s) - g_*(s,x)   \} \leq d, \end{equation*}
which holds if and only if there exists some $s \in \R^L$ such that:
\begin{equation}\label{fench-simplified} \delta_\Zset^*(s) - g_*(s,x) \leq d. \end{equation}
Note that this constraint is convex in $s$. Because every step is `if and only if', constraint \eqref{fench-prestart} is equivalent to constraint \eqref{fench-simplified}.

\subsubsection{$g$ is the sum of other functions.}

If $g$ can be written as the sum of several other functions, it might be impossible or very difficult to find a closed form solution for its conjugate function. Suppose we have a constraint of the form:
\begin{equation}\label{fench-sum1} \sum_{i \in I} g_i(\zeta,x) \leq d  \qquad \forall \zeta \in \Zset, \end{equation}
which is constraint \eqref{fench-prestart} with $g = \sum_{i \in I} g_i$. If we want to formulate an equivalent constraint using Fenchel's duality, we need the concave conjugate of $g$. Under some mild assumptions on $g_i$, it turns out to be sufficient to have closed form solutions for the conjugates of $g_i$. The following lemma appears to be very useful \cite[p.~145]{Rockafellar}:
\begin{lemma} \label{conj-sum-concave}
Let $\psi_i$ ($i \in I$) be proper concave functions on $\R^n$. If $\cap_{i \in I}$ ri(dom $\psi_i$) $\neq \emptyset$ then
\[ (\sum_{i \in I} \psi_i)_*(s) = \sup_{\sum_{i \in I}s_i=s} \{ \sum_{i \in I} (\psi_i)_*(s_i) \}, \]
and the supremum is attained.
\end{lemma}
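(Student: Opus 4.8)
The plan is to reduce the concave statement to the standard convex result on conjugates of sums by a sign change. Recall that for a proper concave function $\psi$ on $\R^n$ the associated function $\phi(x) := -\psi(x)$ is proper convex, with effective domain $\mathrm{dom}\,\phi = \mathrm{dom}\,\psi$, and a direct computation from the definitions shows $\phi^*(s) = -\psi_*(-s)$: indeed $\phi^*(s) = \sup_x\{s\transp x - \phi(x)\} = \sup_x\{s\transp x + \psi(x)\} = -\inf_x\{-s\transp x - \psi(x)\} = -\psi_*(-s)$. So the first step is to translate the hypotheses into convex language: set $\phi_i := -\psi_i$, note $\bigcap_{i\in I}\mathrm{ri}(\mathrm{dom}\,\phi_i) = \bigcap_{i\in I}\mathrm{ri}(\mathrm{dom}\,\psi_i)\neq\emptyset$, and observe $-\sum_i\psi_i = \sum_i\phi_i$.

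The second step is to invoke the classical theorem on the conjugate of a sum of proper convex functions under the relative-interior intersection condition (this is exactly the convex counterpart cited from Rockafellar, Theorem 16.4, the infimal-convolution formula): under $\bigcap_{i\in I}\mathrm{ri}(\mathrm{dom}\,\phi_i)\neq\emptyset$ one has
\[
\Bigl(\sum_{i\in I}\phi_i\Bigr)^{\!*}(s) \;=\; \inf_{\sum_{i\in I}s_i = s}\ \sum_{i\in I}\phi_i^*(s_i),
\]
with the infimum attained. The third step is purely bookkeeping with signs: using $(\sum_i\phi_i)^*(s) = -(\sum_i\psi_i)_*(-s)$ and $\phi_i^*(s_i) = -(\psi_i)_*(-s_i)$, substitute into the displayed identity and reindex the constraint $\sum_i s_i = s$ via $t_i := -s_i$, $t := -s$. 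This gives $-(\sum_i\psi_i)_*(t) = \inf_{\sum_i t_i = t}\sum_i\bigl(-(\psi_i)_*(t_i)\bigr) = -\sup_{\sum_i t_i = t}\sum_i(\psi_i)_*(t_i)$; negating both sides and renaming $t$ back to $s$ yields the claim, and attainment of the supremum follows from attainment of the infimum in the convex statement.

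The only real content is the convex theorem on conjugates of sums, which we are entitled to quote from Rockafellar; the rest is the routine dictionary between proper concave functions and their proper convex negatives. Accordingly, the main thing to be careful about is the consistent placement of minus signs in the three reflections involved (argument of the conjugate, value of the conjugate, and the summation constraint), and making sure the relative-interior hypothesis transfers verbatim — which it does, since negation does not move effective domains. There is no genuine obstacle beyond this sign-tracking.
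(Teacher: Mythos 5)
Your proof is correct: the dictionary $\phi_i=-\psi_i$ with $\phi_i^*(s)=-(\psi_i)_*(-s)$ is computed correctly, the relative-interior hypothesis transfers verbatim since negation preserves effective domains, and the reindexing $t_i=-s_i$ in the constraint $\sum_i s_i=s$ recovers the stated identity together with attainment of the supremum. The paper itself offers no proof---it quotes the lemma directly from Rockafellar (p.~145)---so your reduction to the convex conjugate-of-a-sum theorem (Rockafellar, Theorem~16.4) simply supplies, correctly, the standard derivation the paper leaves to the citation.
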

Applying this lemma, we can rewrite constraint \eqref{fench-sum1} as:
\begin{align*}
 \delta_\Zset^*(s) - \max_{\sum_{i \in I}s_i = s} \{ \sum_{i \in I} (g_i)_*(s_i,x) \} \leq d,
\end{align*}
which is valid if and only if there exists $s_i \in \R^L (i \in I)$ such that
\begin{align}
\label{fench-sum3}
  \delta_\Zset^*(\sum_{i \in I} s_i) - \sum_{i \in I}  (g_i)_*(s_i,x)  \leq d.
\end{align}
So, if all conjugates have closed form solutions, the reformulated constraint also has a closed form. If all conjugates do not have a closed form, this reformulation is still useful because it allows computing each term separately. We will show this for piecewise linear convex functions later in this section.
It should again be noted that this constraint is convex in $s_i$.

\subsubsection{$f$ is not convex or $g$ is not concave.}

Let us investigate the consequences to the reformulation if $g$ is not concave. We assume $g$ is finite on some non-empty set $\Zset$, and $\infty$ elsewhere, so that its conjugate is non-trivial. Fenchel's inequality \cite[p.~105]{Rockafellar} states that:

\[ \delta_\Zset(\zeta) + \delta_\Zset^*(s)    = \delta_\Zset(\zeta) + \sup_{\zeta' \in \Zset} \{ {\zeta'}\transp s - \delta_\Zset(\zeta') \}     \geq \delta_\Zset(\zeta) + \zeta \transp s - \delta_\Zset(\zeta)     = \zeta \transp s   \]
\[ g(\zeta,x) + g_*(s,x)    = g(\zeta,x) + \inf_{\zeta' \in \Zset} \{ {\zeta'}\transp s - g(\zeta',x) \}     \leq g(\zeta,x) + \zeta \transp s - g(\zeta,x)     = \zeta \transp s,   \]
hence:

\[ \delta_\Zset(\zeta) + \delta_\Zset^*(s)    \geq    g(\zeta,x) + g_*(s,x),     \]
and consequently:

\[ g(\zeta,x) - \delta_\Zset(\zeta)    \leq    \delta_\Zset^*(s) - g_*(s,x).   \]
This implies that if constraint \eqref{fench-simplified} is satisfied, then so is constraint \eqref{fench-start}, but the reverse implication is not necessarily true. Hence, constraint \eqref{fench-simplified} is a conservative reformulation of constraint \eqref{fench-start}.

Also, Lemma \ref{conj-sum-concave} no longer holds with equality. We can rewrite it as an inequality:
\begin{align*}
  (\sum_{i \in I} \psi_i)_*(s) &= \inf_{t} \{ s \transp t - \sum_{i \in I} \psi_i(t) \} \\
                               &= \inf_{t} \{ \sum_{i \in I} s_i \transp t - \psi_i(t) \},
\end{align*}
for any $s_1,...,s_{m} \in \R^L$ for which $\sum_{i \in I} s_i = s$. So in particular:
\begin{align*}
  (\sum_{i \in I} \psi_i)_*(s) &=    \sup_{\sum_{i \in I}s_i=s} \{ \inf_{t} \{ \sum_{i \in I} s_i \transp t - \psi_i(t) \} \} \\
                               &\geq \sup_{\sum_{i \in I}s_i=s} \{ \sum_{i \in I} \inf_{t} \{ s_i \transp t - \psi_i(t) \} \} \\
                               &=    \sup_{\sum_{i \in I}s_i=s} \{ \sum_{i \in I} (\psi_i)_*(s_i) \}.
\end{align*}
This implies that if constraint \eqref{fench-sum3} is satisfied, then so is constraint \eqref{fench-sum1}, but the reverse implication is not necessarily true. Hence, constraint \eqref{fench-sum3} can be seen as a conservative reformulation of constraint \eqref{fench-sum1}.

\subsubsection{$g$ is the sum of pointwise maxima of linear functions.}
Let us first derive the conjugates of some functions before we arrive at the theorem:
\begin{align*}
    \delta_\Zset^*(s) &= \sup_{\zeta \in \R^L} \{s \transp \zeta - \delta_\Zset(\zeta)\} = \sup_{\zeta \in \Zset}\{s \transp \zeta\} = \max_{\zeta \in \Zset}\{s \transp \zeta\},
\end{align*}
and
\begin{align*}
   (\max_{j \in J} \{ \ell_{ij}(\zeta,x) \})_*(s_i,x) &= \inf_{\zeta \in \Zset} \{  s_i \transp \zeta - \max_{j \in J} \{ \ell_{ij}(\zeta,x) \} \}  \\
                                                      &= \inf_{\zeta \in \Zset} \{ \min_{j \in J}\{ s_i \transp \zeta - \ell_{ij}(\zeta,x) \} \}    \\
                                                      &= \min_{j \in J}\{ \inf_{\zeta \in \Zset} \{ s_i \transp \zeta - \ell_{ij}(\zeta,x) \} \}.
\end{align*}
\begin{theorem}
Applying Fenchel's duality to:
\begin{align} \max_{\zeta \in \Zset} \sum_{i \in I} \max_{j \in J} \{ \ell_{ij}(\zeta,x) \} \leq d, \label{cons:eqtoaarcr} \end{align}
gives a formulation that is equivalent to the AARC-R.
\end{theorem}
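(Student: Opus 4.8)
The idea is to push both the formulation obtained from Fenchel's duality and the AARC-R through to the \emph{same} deterministic system — one written purely in terms of the support function $\delta_\Zset^*$ — and then to observe that the two systems coincide after a relabelling of variables. Throughout I would write the biaffine functions in the form $\ell_{ij}(\zeta,x)=p_{ij}(x)+q_{ij}(x)\transp \zeta$ with $p_{ij},q_{ij}$ affine in $x$; a leading term $\ell(\zeta,x)$, if present as in \eqref{cons:general}, is absorbed by treating it as one more index $i$ with $|J|=1$, so working with \eqref{cons:eqtoaarcr} as stated loses no generality.

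First I would invoke the earlier development. Since the left-hand side of \eqref{cons:eqtoaarcr} is convex (not concave) in $\zeta$, being a sum of pointwise maxima of affine functions, we are in the non-concave branch and, by the inequality version of Lemma~\ref{conj-sum-concave}, the constraint produced by Fenchel's duality is \eqref{fench-sum3} with $g_i(\zeta,x)=\max_{j\in J}\ell_{ij}(\zeta,x)$: there exist $s_i\in\R^L$, $i\in I$, with
\[ \delta_\Zset^*\Bigl(\sum_{i\in I}s_i\Bigr)-\sum_{i\in I}(g_i)_*(s_i,x)\le d. \]
Substituting the conjugates computed just before the theorem, namely $\delta_\Zset^*(s)=\max_{\zeta\in\Zset}s\transp\zeta$ and, from the biaffine form, $\inf_{\zeta\in\Zset}\{s_i\transp\zeta-\ell_{ij}(\zeta,x)\}=-p_{ij}(x)-\delta_\Zset^*(q_{ij}(x)-s_i)$, one gets $(g_i)_*(s_i,x)=\min_{j\in J}\{-p_{ij}(x)-\delta_\Zset^*(q_{ij}(x)-s_i)\}$, hence $-(g_i)_*(s_i,x)=\max_{j\in J}\{p_{ij}(x)+\delta_\Zset^*(q_{ij}(x)-s_i)\}$. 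The Fenchel constraint therefore reads: there exist $s_i$ with $\delta_\Zset^*(\sum_i s_i)+\sum_i\max_{j\in J}\{p_{ij}(x)+\delta_\Zset^*(q_{ij}(x)-s_i)\}\le d$, which after introducing a free variable $t_i$ for each inner maximum is equivalent to: there exist $s_i\in\R^L$, $t_i\in\R$ with $\delta_\Zset^*(\sum_i s_i)+\sum_i t_i\le d$ and $t_i\ge p_{ij}(x)+\delta_\Zset^*(q_{ij}(x)-s_i)$ for all $i\in I$, $j\in J$.

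Then I would put the AARC-R in the same shape. Substituting $y_i=v_i+w_i\transp\zeta$ and eliminating $\zeta$ by the worst case over $\Zset$ (again $\max_{\zeta\in\Zset}a\transp\zeta=\delta_\Zset^*(a)$), the robust constraint $v_i+w_i\transp\zeta\ge\ell_{ij}(\zeta,x)\ \forall\zeta\in\Zset$ becomes $v_i\ge p_{ij}(x)+\delta_\Zset^*(q_{ij}(x)-w_i)$, and $\sum_i(v_i+w_i\transp\zeta)\le d\ \forall\zeta\in\Zset$ becomes $\sum_i v_i+\delta_\Zset^*(\sum_i w_i)\le d$. This system is literally the one obtained above under the bijection $s_i\leftrightarrow w_i$, $t_i\leftrightarrow v_i$, so the two formulations have exactly the same set of feasible $(x,d)$, which is the assertion.

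The conceptual crux — and the only step that is not pure bookkeeping — is recognizing that the auxiliary variables $t_i$ one must introduce to split the $\max_{j\in J}$ out of the Fenchel constraint are precisely the analysis variables $v_i$ of the AARC-R, and that the Fenchel multipliers $s_i$ are precisely the slopes $w_i$ of the affine decision rule; once this match is seen, the rest is routine. The technical points to verify are standard: the inequality form of Lemma~\ref{conj-sum-concave} needs only $\bigcap_{i\in I}\mathrm{ri}(\mathrm{dom}\,g_i)\neq\emptyset$, which holds since each $g_i$ is finite on the (assumed compact, convex, nonempty) set $\Zset$; compactness of $\Zset$ makes the suprema defining $\delta_\Zset^*$ attained and makes $(g_i)_*$ finite; and the infimum over $\zeta$ in $(g_i)_*$ may be restricted to $\Zset$ because $g_i=-\infty$ off $\Zset$.
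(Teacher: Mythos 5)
Your proposal is correct and follows essentially the same route as the paper: both start from the conservative Fenchel reformulation \eqref{fench-sum3} with $g_i(\zeta,x)=\max_{j\in J}\ell_{ij}(\zeta,x)$, plug in the conjugates computed just before the theorem, introduce one scalar per inner maximum, and identify the Fenchel multipliers $s_i$ with the slopes $w_i$ and the new scalars (your $t_i$, the paper's $-z_i$) with the $v_i$ of the AARC-R. The only difference is presentational: you eliminate $\zeta$ entirely via the support function $\delta_\Zset^*$ and compare deterministic systems, whereas the paper rewrites the Fenchel constraint back into semi-infinite form and matches it to the AARC-R constraint by constraint; the content is the same.
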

\begin{proof}
Constraint \eqref{cons:eqtoaarcr} is equivalent to constraint \eqref{fench-sum1} with $g_i(\zeta,x) = \max_{j \in J} \{ \ell_{ij}(\zeta,x) \}$ for $\zeta$ in $\Zset$ and $g_i(\zeta,x) = -\infty$ otherwise. For for any fixed $x$, $g_i$ is not concave in $\zeta$ so we will end up with a conservative instead of an equivalent reformulation. If we fill in the conjugate functions in constraint \eqref{fench-sum3}, the following conservative reformulation is obtained:
\begin{equation*} \max_{\zeta \in \Zset}\{\sum_{i \in I} s_i \transp \zeta\} - \sum_{i \in I} \min_{j \in J}\{ \inf_{\zeta \in \Zset} \{ s_i \transp \zeta - \ell_{ij}(\zeta,x) \} \} \leq d. \end{equation*}
If we model the second terms as $\sum_{i \in I} z_i$, we can write this as:
\begin{align*}
	\sum_{i \in I} s_i \transp \zeta - \sum_{i \in I} z_i &\leq d && \forall \zeta \in \Zset \\
	z_i            &\leq s_i \transp \zeta - \ell_{ij}(\zeta,x)   && \forall \zeta \in \Zset \quad \forall i \in I \quad \forall j \in J,
\end{align*}
and by rearranging the terms in each constraint we obtain:
\begin{align*}
	\sum_{i \in I} [(-z_i) + s_i \transp \zeta] &\leq d                   &&\forall \zeta \in \Zset \\
	(-z_i) + s_i \transp \zeta                  &\geq \ell_{ij}(\zeta,x)  &&\forall \zeta \in \Zset \quad \forall i \in I \quad \forall j \in J,
\end{align*}
which is the same as the AARC-R. \qed
\end{proof}
\section{Derivation of AARC-R by reformulating the nonrobust constraint}
\label{sec:deriv-marleen}
In this appendix we give a different derivation of the AARC-R of constraint \eqref{cons:general} when both the biaffine functions and the uncertainty region are separable in the following way:
\begin{align} \label{eq:mrl1}
   \sum_{i \in I} \max_{j \in J} \{ \sum_{k \in K} \ell_{ijk}(\zeta_k,x) \} \leq d \qquad \forall \zeta_k \in \Zset_k \quad (k \in K),
\end{align}
and $\Zset_k$ is the convex hull of different scenarios $\zeta_k^s$ ($s \in S$). An example where this constraint is commonly used, is HDR brachytherapy optimization (\cite{Alterovitz2006,Karabis2009,Lessard2001}). If the summation over $k$ were outside the $max$ expression, then an analysis variable could be used for every $k$ without introducing any conservatism. Vertex enumeration can then be done on every $\Zset_k$ separately. For problems not affected by uncertainty, we show that indeed there is an equivalent formulation where the summation over $k$ is outside the $max$ expression. Then we show equivalence to the AARC-R if there is uncertainty. First, we prove the following equality for fixed $x$ and $\zeta_k$:
\begin{lemma}\label{lem:lem1}
\begin{align}
 \sum_{i \in I} \max_{j \in J} \{ \sum_{k \in K} \ell_{ijk}(\zeta_k,x) \}
 =
 \min_{y \in \R^{|I||J||K|} : \sum_{k \in K} y_{ijk} = 0} \sum_{k \in K} \sum_{i \in I} \max_{j \in J} \{ y_{ijk} + \ell_{ijk}(\zeta_k,x) \}. \label{eq:mrl2}
\end{align}
\end{lemma}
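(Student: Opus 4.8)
The plan is to prove the identity in \eqref{eq:mrl2} by establishing the two inequalities separately, and in the process exhibit an explicit minimizer (which also shows the minimum on the right-hand side is attained). Throughout, $x$ and the $\zeta_k$ are fixed, so each $\ell_{ijk}(\zeta_k,x)$ is just a real number; write $a_{ijk}:=\ell_{ijk}(\zeta_k,x)$ for brevity, and for each $i\in I$ let $j^\star(i)\in J$ attain $\max_{j\in J}\sum_{k\in K}a_{ijk}$.

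The inequality ``$\leq$'' is the easy direction. Take any $y$ feasible for the right-hand side, i.e.\ $\sum_{k\in K}y_{ijk}=0$ for every pair $(i,j)$. Bounding each inner maximum from below by its $j^\star(i)$-term and using the equality constraint gives
\[
\sum_{k\in K}\sum_{i\in I}\max_{j\in J}\{y_{ijk}+a_{ijk}\}
\;\geq\;\sum_{i\in I}\sum_{k\in K}\bigl(y_{i\,j^\star(i)\,k}+a_{i\,j^\star(i)\,k}\bigr)
\;=\;\sum_{i\in I}\Bigl(\sum_{k\in K}a_{i\,j^\star(i)\,k}\Bigr)
\;=\;\sum_{i\in I}\max_{j\in J}\sum_{k\in K}a_{ijk},
\]
and taking the minimum over feasible $y$ yields ``$\leq$''.

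For the reverse inequality ``$\geq$'' it suffices to produce one feasible $y$ at which the right-hand objective equals the left-hand side. Set $y_{i\,j^\star(i)\,k}:=0$ for all $k$, and for $j\neq j^\star(i)$ choose $y_{ijk}$ with $\sum_{k\in K}y_{ijk}=0$ and $y_{ijk}\leq a_{i\,j^\star(i)\,k}-a_{ijk}$ for every $k$. Such a choice exists because these componentwise upper bounds have nonnegative sum, $\sum_{k\in K}\bigl(a_{i\,j^\star(i)\,k}-a_{ijk}\bigr)=\sum_{k}a_{i\,j^\star(i)\,k}-\sum_{k}a_{ijk}\geq 0$ by the defining optimality of $j^\star(i)$; concretely, set $y_{ijk}$ equal to its upper bound for all but one index $k$ and lower the remaining one to restore the equality constraint. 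For this $y$, for each pair $(i,k)$ the inner maximum over $j$ equals $a_{i\,j^\star(i)\,k}$, since the $j^\star(i)$-term is exactly $a_{i\,j^\star(i)\,k}$ while every other term satisfies $y_{ijk}+a_{ijk}\leq a_{i\,j^\star(i)\,k}$. Summing over $k$ and $i$ gives $\sum_{k\in K}\sum_{i\in I}a_{i\,j^\star(i)\,k}=\sum_{i\in I}\max_{j\in J}\sum_{k\in K}a_{ijk}$, so the right-hand objective is at most the left-hand side. Combining the two inequalities proves the identity, and since the constructed $y$ attains the common value, the minimum is attained. I do not anticipate a genuine obstacle; the one point meriting care is the feasibility argument in the ``$\geq$'' direction — that the upper bounds $a_{i\,j^\star(i)\,k}-a_{ijk}$ imposed on $y_{ijk}$ are jointly compatible with $\sum_k y_{ijk}=0$ — and this is precisely the optimality property of $j^\star(i)$. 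Everything else is elementary manipulation of finite sums and maxima.
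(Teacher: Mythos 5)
Your proof is correct, and its overall skeleton matches the paper's: the ``$\leq$'' direction is obtained exactly as in the paper, by lower-bounding each inner maximum by its $j^\star(i)$-term and invoking $\sum_k y_{ijk}=0$, and the ``$\geq$'' direction is obtained by exhibiting an explicit feasible $y$ at which the right-hand objective collapses to the left-hand value. The one genuine difference is the choice of that minimizer. The paper takes the symmetric averaging choice $y_{ijk}=\frac{1}{|K|}\sum_{k'\in K}\ell_{ijk'}(\zeta_{k'},x)-\ell_{ijk}(\zeta_k,x)$, which makes $y_{ijk}+\ell_{ijk}(\zeta_k,x)$ independent of $k$ for every $(i,j)$, so the inner maxima evaluate immediately without singling out an optimal index; feasibility is a one-line check and no case distinction is needed. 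You instead set $y_{i\,j^\star(i)\,k}=0$ and push the non-optimal rows down below the componentwise bounds $a_{i\,j^\star(i)\,k}-a_{ijk}$, which requires the small feasibility argument you correctly supply (the bounds have nonnegative sum by optimality of $j^\star(i)$, so one can saturate all but one component and adjust the last). Both constructions are valid and both show the minimum is attained; the paper's is a touch cleaner because it is a closed-form formula symmetric in $j$ and avoids the tie-breaking/adjustment step, whereas yours makes more transparent \emph{why} equality is achievable, namely that $y$ can be used to transfer slack between the $k$'s so that the pointwise maxima all line up at $j^\star(i)$. Either version supports the subsequent use in Lemma \ref{lem:lem2}, since both constructions depend on the realized values $\ell_{ijk}(\zeta_k,x)$ and hence are unavailable in the robust setting.
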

\begin{proof}
Note that:
\begin{align*}
 \sum_{k \in K} \sum_{i \in I} \max_{j \in J} \{ y_{ijk} + \ell_{ijk}(\zeta_k,x) \}
 \geq
 \sum_{k \in K} \sum_{i \in I} y_{ij(i)k} + \ell_{ij(i)k}(\zeta_k,x)
 = \sum_{i \in I} \sum_{k \in K} \ell_{ij(i)k}(\zeta_k,x) \quad \forall j(i) \in J, \\
\end{align*}
for any $j(i)$ in $J$, so in particular:
\begin{align*}
 \sum_{k \in K} \sum_{i \in I} \max_{j \in J} \{ y_{ijk} + \ell_{ijk}(\zeta_k,x) \} \geq \sum_{i \in I} \max_{j \in J} \{ \sum_{k \in K} \ell_{ijk}(\zeta_k,x) \},
\end{align*}
for any $y$, so in particular for the minimum. Hence, the right hand side of \eqref{eq:mrl2} is at least as large as the left hand side. On the other hand, given a feasible point for the left hand side of \eqref{eq:mrl2}, we can always construct a feasible point for the right hand side with equal value by taking the same $x$, and $y_{ijk} = \frac{1}{|K|} \sum_{k' \in K} \ell_{ijk'}(\zeta_{k'},x) - \ell_{ijk}(\zeta_k,x)$:
\begin{align*}
       \sum_{k \in K} \sum_{i \in I} \max_{j \in J} \{ y_{ijk} + \ell_{ijk}(\zeta_k,x) \}
  =    \sum_{i \in I} \sum_{k \in K} \max_{j \in J} \{ \frac{1}{|K|} \sum_{k' \in K} \ell_{ijk'}(\zeta_{k'},x) \}
  =    \sum_{i \in I} \max_{j \in J} \{ \sum_{k' \in K} \ell_{ijk'}(\zeta_{k'},x) \}.
\end{align*} \qed
\end{proof}
\begin{lemma}\label{lem:lem2}
A conservative reformulation of constraint \eqref{eq:mrl1} is given by:
\begin{align*}
&   \sum_{k \in K} \sum_{i \in I} \max_{j \in J} \{ y_{ijk} + \ell_{ijk}(\zeta_k,x) \} \leq d && \forall \zeta_k \in \Zset_k \quad (k \in K) \\
&   \sum_{k \in K} y_{ijk} = 0                                                                && \forall i \in I \quad \forall j \in J.
\end{align*}
\end{lemma}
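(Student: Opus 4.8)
The plan is to obtain Lemma~\ref{lem:lem2} as an immediate consequence of Lemma~\ref{lem:lem1}; only the inequality ``left-hand side $\leq$ right-hand side'' of~\eqref{eq:mrl2} is needed, and it is needed only pointwise in $\zeta$. Concretely, Lemma~\ref{lem:lem1} gives, for every fixed $x$, every $(\zeta_k)_{k \in K}$ with $\zeta_k \in \Zset_k$, and every $y$ with $\sum_{k \in K} y_{ijk} = 0$ for all $i \in I,\ j \in J$,
\[
  \sum_{i \in I} \max_{j \in J} \Big\{ \sum_{k \in K} \ell_{ijk}(\zeta_k,x) \Big\}
  \;\leq\;
  \sum_{k \in K} \sum_{i \in I} \max_{j \in J} \{ y_{ijk} + \ell_{ijk}(\zeta_k,x) \},
\]
this being the inequality established in the first part of the proof of Lemma~\ref{lem:lem1} (replace each $\max_j$ on the right by the value at any index function $j(\cdot)$, use $\sum_{k \in K} y_{ij(i)k}=0$, and then maximize over $j(\cdot)$).

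First I would fix a point $(x,d)$ together with analysis variables $y$ satisfying the system in the statement, i.e.\ $\sum_{k \in K} y_{ijk} = 0$ for all $i,j$ and $\sum_{k \in K}\sum_{i \in I}\max_{j \in J}\{y_{ijk}+\ell_{ijk}(\zeta_k,x)\}\leq d$ for all $\zeta_k \in \Zset_k$ ($k \in K$). Then, for an arbitrary choice of realizations $\zeta_k \in \Zset_k$, the displayed inequality and the robust constraint of the system combine to give $\sum_{i \in I}\max_{j \in J}\{\sum_{k \in K}\ell_{ijk}(\zeta_k,x)\} \leq d$. Since the realizations were arbitrary, $(x,d)$ is feasible for~\eqref{eq:mrl1}, which is exactly what ``conservative reformulation'' requires.

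I do not expect a real obstacle here, since the work is already done in Lemma~\ref{lem:lem1}; the only point worth stressing is the source of the conservatism. In Lemma~\ref{lem:lem1} the minimizing $y$ may depend on the realization, whereas in Lemma~\ref{lem:lem2} a single $y$ must serve all realizations simultaneously, so the pointwise equality of~\eqref{eq:mrl2} collapses to the one-sided inequality used above. It is also useful to note in passing (though not needed for the lemma) that the robust constraint of the new system is separable across $k$ --- its $k$-th summand depends only on $\zeta_k$ --- so its worst case over $\prod_{k \in K}\Zset_k$ decomposes into $\sum_{k \in K}\max_{\zeta_k \in \Zset_k}(\cdot)$ and, by convexity, reduces to the finitely many scenario vectors $\zeta_k^s$; this is what keeps the reformulation tractable and sets up the comparison with the AARC-R.
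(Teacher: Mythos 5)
Your proof is correct and follows essentially the same route as the paper: both arguments reuse the first (one-sided) part of the proof of Lemma \ref{lem:lem1} to get, for any fixed $y$ with $\sum_{k}y_{ijk}=0$ and any realization, the pointwise bound of the original left-hand side by the new one, and both attribute the conservatism to the fact that the equality-achieving $y$ in Lemma \ref{lem:lem1} depends on the realization while the reformulation forces a single $y$. Your write-up is, if anything, slightly more explicit than the paper's about quantifying over realizations and about what ``conservative reformulation'' formally requires.
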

\begin{proof}
We use the proof of Lemma \ref{lem:lem1}. The first part of the proof still holds, so the equality in constraint \eqref{eq:mrl2} becomes a ``$\leq$''. The construction of the feasible point in the second part of the proof depends on the value $\ell_{ijk}(\zeta_k,x)$. This value does not exist in the robust constraint \eqref{eq:mrl1}, since there is no single $\zeta_k$. Note that the left hand sides of \eqref{eq:mrl1} and \eqref{eq:mrl2} are the same, so by replacing the left hand side of \eqref{eq:mrl1} with the right hand side of \eqref{eq:mrl2}, the conservative reformulation is obtained. \qed
\end{proof}
The uncertainty is separable per $k$ in the first constraint of the conservative reformulation. Hence, analysis variables per $k$ can replace each term without changing the solution. Vertex enumeration can then be done for every $\Zset_k$ separately:
\begin{align}
& \sum_{k \in K} z_k \leq d \label{marlve} \\
& z_k \geq \sum_{i \in I} w_{iks}                   && \forall s \in S \notag \\
& w_{iks} \geq y_{ijk} + \ell_{ijk}(\zeta_k^s,x)    && \forall i \in I \quad \forall s \in S \quad \forall k \in K \notag \\
& \sum_{k \in K} y_{ijk} = 0                        && \forall i \in I \quad \forall j \in J. \notag
\end{align}
It remains to show that this formulation is equivalent to the AARC-R of constraint \eqref{eq:mrl1}.
\begin{theorem}
The conservative reformulation in Lemma \ref{lem:lem2} is the AARC-R in case of scenario generated uncertainty.
\end{theorem}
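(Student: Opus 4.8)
The plan is to prove the equivalence by showing that the two formulations have the same projection onto the space of the ``genuine'' variables $(x,d)$: for every feasible point of \eqref{marlve} I exhibit a feasible affine decision rule with the same $x$ and $d$, and conversely. First I would make the AARC-R of \eqref{eq:mrl1} fully explicit. The RC-R analysis variable $y_i$ is replaced by an affine decision rule in the uncertain parameter; under scenario-generated uncertainty the uncertain parameter of block $k$ is the weight vector $\lambda_k\in\Delta^{|S|-1}$, with $\zeta_k=\sum_{s\in S}\lambda_{ks}\zeta_k^s$, so I would write $y_i(\lambda)$ as an affine function on $\prod_{k\in K}\Delta^{|S|-1}$. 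Because, for a fixed affine rule, both $\sum_{i\in I}y_i(\lambda)\le d$ and $y_i(\lambda)\ge\sum_{k\in K}\ell_{ijk}(\zeta_k,x)$ are affine in $\lambda$ (the right-hand side equals $\sum_{k}\sum_{s}\lambda_{ks}\ell_{ijk}(\zeta_k^s,x)$), the semi-infinite AARC-R is equivalent to the finite system obtained by letting $\lambda$ range over the vertices of $\prod_k\Delta^{|S|-1}$, i.e.\ over the scenario tuples $(s_k)_{k\in K}$. Evaluated at such a vertex, $y_i$ takes a value of the separable form $\sum_{k\in K}w_{iks_k}$, which motivates introducing the coefficients $w_{iks}$ and, for the objective-side constraint, the auxiliary variables $z_k:=\max_{s\in S}\sum_{i\in I}w_{iks}$.

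Next I would establish the two inclusions. For ``\eqref{marlve}$\Rightarrow$AARC-R'', start from a feasible point of \eqref{marlve} and set $y_i(\lambda):=\sum_{k\in K}\sum_{s\in S}w_{iks}\lambda_{ks}$; using $w_{iks}\ge y_{ijk}+\ell_{ijk}(\zeta_k^s,x)$ and $\sum_{s}\lambda_{ks}=1$ one gets $y_i(\lambda)-\sum_{k}\ell_{ijk}(\zeta_k,x)\ge\sum_{k}y_{ijk}\sum_{s}\lambda_{ks}=\sum_{k}y_{ijk}=0$ for every $\lambda$ and every $j$, and $\sum_{i}y_i(\lambda)=\sum_{k}\sum_{s}\big(\sum_i w_{iks}\big)\lambda_{ks}\le\sum_{k}z_k\le d$, so the rule is AARC-R-feasible with the same $x,d$. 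For the converse, take a feasible affine rule, write its value at the scenario tuple $(s_k)_k$ in the form $\sum_{k}w_{iks_k}$ (splitting the constant term evenly over the blocks), and set $z_k:=\max_{s}\sum_i w_{iks}$. The vertex inequalities $\sum_{k}\{w_{iks_k}-\ell_{ijk}(\zeta_k^{s_k},x)\}\ge 0$, valid for all scenario tuples and all $j$, imply---by minimizing over each block independently---that $\sum_{k}m_{ijk}\ge 0$ with $m_{ijk}:=\min_{s}\{w_{iks}-\ell_{ijk}(\zeta_k^s,x)\}$. Then $y_{ijk}:=m_{ijk}-\tfrac{1}{|K|}\sum_{k'}m_{ijk'}$ satisfies $\sum_{k}y_{ijk}=0$ and $w_{iks}\ge y_{ijk}+\ell_{ijk}(\zeta_k^s,x)$ for every $s$, while $\sum_{k}z_k=\max_{(s_k)}\sum_{i}\sum_{k}w_{iks_k}\le d$. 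This reproduces \eqref{marlve}. The zero-sum shift $y_{ijk}$ plays here the role it played in Lemma \ref{lem:lem1}, except that it is now a single vector fixed across all realizations of $\zeta$, which is precisely what turns the exact identity of Lemma \ref{lem:lem1} into the conservative reformulation of Lemma \ref{lem:lem2}.

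The step I expect to be the main obstacle is the passage between the single coupled requirement ``for every scenario tuple $(s_k)_k$ and every $j$, $\sum_{k}\{w_{iks_k}-\ell_{ijk}(\zeta_k^{s_k},x)\}\ge 0$'' and the decoupled system ``there exist $y_{ijk}$ with $\sum_{k}y_{ijk}=0$ and $w_{iks}\ge y_{ijk}+\ell_{ijk}(\zeta_k^s,x)$ for all $s$''. One direction is mere summation over $k$; the other relies on the observation that the worst scenario can be selected independently in each block, so the coupled condition collapses to $\sum_{k}m_{ijk}\ge 0$, after which a feasible zero-sum $y_{ijk}$ exists exactly because that sum is non-negative. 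The companion fact for the objective-side constraint---that the maximum over a product of scenario sets distributes as a sum of per-block maxima---is what makes $\sum_{k}z_k\le d$ line up with $\sum_{i}y_i(\lambda)\le d$ at the vertices. A secondary point requiring care is the bookkeeping of the constant term of the affine rule: it must be split among the blocks when passing to the $w_{iks}$ and reabsorbed when passing back, so that neither direction introduces spurious conservatism.
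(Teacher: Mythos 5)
Your proof is correct and follows essentially the same route as the paper's: write out the AARC-R explicitly, exploit the product structure of the scenario simplices to decouple the coupled constraint into per-block constraints with a zero-sum shift $y_{ijk}$, absorb the constant term $v_i$ into the $w$'s, and identify the resulting finite system with \eqref{marlve} via vertex enumeration. The one place you go beyond the paper is the decoupling step --- the paper merely asserts that \eqref{cons:mrl4} ``can be reformulated'' with the zero-sum $y_{ijk}$, whereas you justify it via the per-block minima $m_{ijk}$ and the observation that the worst scenario can be selected independently in each block; that is precisely the justification the paper omits, and your version of it is correct.
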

\begin{proof}
The AARC-R of constraint \eqref{eq:mrl1} is given by:
\begin{align}
\sum_{i \in I} \left( v_i + \sum_{k \in K} w_{ik} \transp \zeta_k \right) &\leq d && \forall \zeta_k \in \Zset_k \quad (k \in K) \notag \\
v_i + \sum_{k \in K} w_{ik} \transp \zeta_k &\geq \sum_{k \in K} \ell_{ij}(\zeta_k,x) && \forall i \in I \quad \forall j \in J \quad \forall \zeta_k \in \Zset_k \quad (k \in K). \label{cons:mrl4}
\end{align}
Constraint \eqref{cons:mrl4} can be reformulated as:
\begin{align*}
&w_{ik} \transp \zeta_k \geq y_{ijk} + \ell_{ij}(\zeta_k,x) && \forall i \in I \quad \forall j \in J \quad \forall \zeta_k \in \Zset_k \quad \forall k \in K \\
&\sum_{k \in K} y_{ijk} = -v_i  && \forall i \in I \quad \forall j \in J.
\end{align*}
We have assumed scenario generated uncertainty, so our uncertainty region is $\Zset_k = \Delta^{|S|-1}$, the standard simplex in $\R^{|S|}$. Suppose an optimal solution has $v_i \neq 0$, then an optimal solution with $v_i=0$ can be obtained by increasing all elements of $w_{ik}$ for a single random $k$ with $v_i$ because the elements of $\zeta_k$ sum to $1$. Hence we fix $v_i=0$. The AARC-R can then be formulated as:
\begin{align*}
 \sum_{k \in K} z_k &\leq d \\
 z_k &\geq \sum_{i \in I} w_{ik} \transp \zeta_k                   && \forall \zeta_k \in \Zset_k \quad (k \in K) \\
 w_{ik} \transp \zeta_k &\geq y_{ijk} + \ell_{ij}(\zeta_k,x)       && \forall i \in I \quad \forall j \in J \quad \forall \zeta_k \in \Zset_k \quad \forall k \in K \\
\sum_{k \in K} y_{ijk} &= 0                                        && \forall i \in I \quad \forall j \in J. \notag
\end{align*}
Let $w_{iks}$ denote the $s^{th}$ component of $w_{ik}$. Equivalence to \eqref{marlve} now follows from vertex enumeration. \qed
\end{proof}
\section{Derivation of the QARC-R for an ellipsoidal uncertainty region}
\label{sec:deriv-qarc-ellipsoidal}
In this appendix the SDP reformulation of the QARC-R of \eqref{cons:general} is derived for an ellipsoidal uncertainty region. For simplicity, we assume $\ell_{ij}(\zeta,x)$ and  $\ell$ are bilinear functions in the parameters. Therefore, they can be expressed as $\ell(\zeta,x) = \zeta \transp Lx$ and $\ell_{ij}(\zeta,x) = \zeta \transp L_{ij}x$, respectively, for some matrices $L$ and $L_{ij}$. The QARC-R is given by:
\begin{align*}
\textrm{(QARC-R)} \qquad \zeta \transp L x + \sum_{i \in I} \left( v_i + w_i \transp \zeta + \zeta \transp W_i \zeta \right) &\leq d && \forall \zeta \in \R^L : \twonorm{\zeta} \leq \Omega \\
 v_i + w_i \transp \zeta + \zeta \transp W_i \zeta  &\geq \zeta \transp L_{ij} x && \forall \zeta \in \R^L : \twonorm{\zeta} \leq \Omega \quad \forall i \in I \quad \forall j \in J.
\end{align*}
By application of the $\mathcal{S}$--lemma \cite[Lemma 6.5.3]{BenTal}, this can be reformulated as the following LMIs:
\begin{align*}
  \sum_{i \in I}
  \begin{pmatrix}
   W_i                     & \frac{1}{2} w_i \\
  \frac{1}{2}  w_i & v_i             \\
  \end{pmatrix}
  +
  \begin{pmatrix}
  0             & \frac{1}{2}Lx \\
  \frac{1}{2}Lx & -d \\
  \end{pmatrix}
  &\preceq
  \lambda_0
  \begin{pmatrix}
  I & 0 \\
  0 & -\Omega^2 \\
  \end{pmatrix}\\
- \begin{pmatrix}
  W_i                                      & \frac{1}{2}\left( w_i - L_{ij} x \right) \\
  \frac{1}{2}\left( w_i - L_{ij} x \right) & v_i                                      \\
  \end{pmatrix}
  &\preceq
  \lambda_{ij}
  \begin{pmatrix}
  I & 0 \\
  0 & -\Omega^2 \\
  \end{pmatrix}  \qquad \forall i \in I \quad \forall j \in J \\
  \lambda_0 \geq 0, \lambda_{ij} \geq 0.
\end{align*}
The QARC-R has $|I||J|+1$ LMIs of size $|L+1|$.

\bibliographystyle{abbrvnatnew}
\bibliography{library}

\begin{thebibliography}{29}
\providecommand{\natexlab}[1]{#1}
\providecommand{\url}[1]{\texttt{#1}}
\expandafter\ifx\csname urlstyle\endcsname\relax
  \providecommand{\doi}[1]{doi: #1}\else
  \providecommand{\doi}{doi: \begingroup \urlstyle{rm}\Url}\fi

\bibitem[Alem and Morabito(2012)]{JoseAlem2012139}
D.~J. Alem and R.~Morabito.
\newblock Production planning in furniture settings via robust optimization.
\newblock \emph{Computers \& Operations Research},
  \href{http://dx.doi.org/10.1016/j.cor.2011.02.022}{39\penalty0 (2):\penalty0
  139--150}, 2012.

\bibitem[Alterovitz et~al.(2006)Alterovitz, Lessard, Pouliot, Hsu, O'Brien, and
  Goldberg]{Alterovitz2006}
R.~Alterovitz, E.~Lessard, J.~Pouliot, I.~J. Hsu, J.~F. O'Brien, and
  K.~Goldberg.
\newblock Optimization of {HDR} brachytherapy dose distributions using linear
  programming with penalty costs.
\newblock \emph{Medical Physics},
  \href{http://dx.doi.org/10.1118/1.2349685}{33\penalty0 (11):\penalty0
  4012--4019}, 2006.

\bibitem[Ben-Tal and Nemirovski(1999)]{BenTal99}
A.~Ben-Tal and A.~Nemirovski.
\newblock Robust solutions of uncertain linear programs.
\newblock \emph{Operations Research Letters},
  \href{http://dx.doi.org/10.1016/S0167-6377(99)00016-4}{25\penalty0
  (1):\penalty0 1--13}, 1999.

\bibitem[Ben-Tal et~al.(2004)Ben-Tal, Goryashko, Guslitzer, and
  Nemirovski]{BenTal:2005:aarc}
A.~Ben-Tal, A.~Goryashko, E.~Guslitzer, and A.~Nemirovski.
\newblock Adjustable robust solutions of uncertain linear programs.
\newblock \emph{Mathematical Programming},
  \href{http://dx.doi.org/10.1007/s10107-003-0454-y}{99\penalty0 (2):\penalty0
  351--376}, 2004.

\bibitem[Ben-Tal et~al.(2005)Ben-Tal, Golany, Nemirovski, and
  Vial]{BenTal:2005:retailer}
A.~Ben-Tal, B.~Golany, A.~Nemirovski, and J.-P. Vial.
\newblock Retailer-supplier flexible commitments contracts: A robust
  optimization approach.
\newblock \emph{Manufacturing \& Service Operations Management},
  \href{http://dx.doi.org/10.1287/msom.1050.0081}{7\penalty0 (3):\penalty0
  248--271}, 2005.

\bibitem[Ben-Tal et~al.(2009{\natexlab{a}})Ben-Tal, {El Ghaoui}, and
  Nemirovski]{BenTal}
A.~Ben-Tal, L.~{El Ghaoui}, and A.~Nemirovski.
\newblock \emph{Robust Optimization}.
\newblock Princeton Series in Applied Mathematics. Princeton University Press,
  \href{http://sites.google.com/site/robustoptimization/}{2009{\natexlab{a}}}.

\bibitem[Ben-Tal et~al.(2009{\natexlab{b}})Ben-Tal, Golany, and
  Shtern]{BenTal:2009:inventory}
A.~Ben-Tal, B.~Golany, and S.~Shtern.
\newblock Robust multi-echelon multi-period inventory control.
\newblock \emph{European Journal of Operations Research},
  \href{http://dx.doi.org/10.1016/j.ejor.2009.01.058}{199\penalty0
  (3):\penalty0 922--935}, 2009{\natexlab{b}}.

\bibitem[Ben-Tal et~al.(2011)Ben-Tal, Chung, Mandala, and Yao]{BenTal20111177}
A.~Ben-Tal, B.~D. Chung, S.~R. Mandala, and T.~Yao.
\newblock Robust optimization for emergency logistics planning: Risk mitigation
  in humanitarian relief supply chains.
\newblock \emph{Transportation Research Part B: Methodological},
  \href{http://dx.doi.org/10.1016/j.trb.2010.09.002}{45\penalty0 (8):\penalty0
  1177--1189}, 2011.

\bibitem[Ben-Tal et~al.(2014)Ben-Tal, den Hertog, and Vial]{BenTal2011}
A.~Ben-Tal, D.~den Hertog, and J.-{\noop Ph}. Vial.
\newblock Deriving robust counterparts of nonlinear uncertain inequalities.
\newblock \emph{Mathematical Programming},
  \href{http://dx.doi.org/10.1007/s10107-014-0750-8}{Online First}, 2014.

\bibitem[Bertsimas and Thiele(2004)]{Bertsimas2004}
D.~Bertsimas and A.~Thiele.
\newblock A robust optimization approach to supply chain management.
\newblock In D.~Bienstock and G.~Nemhauser, editors, \emph{Integer Programming
  and Combinatorial Optimization}, volume 3064 of \emph{Lecture Notes in
  Computer Science}. Springer Berlin / Heidelberg,
  \href{http://dx.doi.org/10.1007/978-3-540-25960-2\_7}{2004}.

\bibitem[Bertsimas and Thiele(2006)]{BertsimasThiele2006}
D.~Bertsimas and A.~Thiele.
\newblock {A robust optimization approach to inventory theory}.
\newblock \emph{Operations Research},
  \href{http://dx.doi.org/10.1287/opre.1050.0238}{54\penalty0 (1):\penalty0
  150--168}, 2006.

\bibitem[Bertsimas et~al.(2012)Bertsimas, Iancu, and
  Parrilo]{BertsimasPolynomials}
D.~Bertsimas, D.~A. Iancu, and P.~A. Parrilo.
\newblock A hierarchy of near-optimal policies for multi-stage adaptive
  optimization.
\newblock \emph{IEEE Automatic Control},
  \href{http://dx.doi.org/10.1109/TAC.2011.2162878}{56\penalty0 (12):\penalty0
  2809--2824}, 2012.

\bibitem[Bienstock and {\"{O}}zbay(2008)]{Bienstock2008389}
D.~Bienstock and N.~{\"{O}}zbay.
\newblock Computing robust basestock levels.
\newblock \emph{Discrete Optimization},
  \href{http://dx.doi.org/10.1016/j.disopt.2006.12.002}{5\penalty0
  (2):\penalty0 389--414}, 2008.

\bibitem[Bohle et~al.(2010)Bohle, Maturana, and Vera]{Bohle2010245}
C.~Bohle, S.~Maturana, and J.~Vera.
\newblock A robust optimization approach to wine grape harvesting scheduling.
\newblock \emph{European Journal of Operations Research},
  \href{http://dx.doi.org/10.1016/j.ejor.2008.12.003}{200\penalty0
  (1):\penalty0 245--252}, 2010.

\bibitem[Chen and Zhang(2009)]{Chen2009}
X.~Chen and Y.~Zhang.
\newblock Uncertain linear programs: Extended affinely adjustable robust
  counterparts.
\newblock \emph{Operations Research},
  \href{http://dx.doi.org/10.1287/opre.1080.0605}{57\penalty0 (6):\penalty0
  1469--1482}, 2009.

\bibitem[{El Ghaoui} and Lebret(1997)]{Ghaoui97}
L.~{El Ghaoui} and H.~Lebret.
\newblock Robust solutions to least-squares problems with uncertain data.
\newblock \emph{SIAM Journal on Matrix Analysis and Applications},
  \href{http://dx.doi.org/10.1137/S0895479896298130}{18\penalty0 (4):\penalty0
  1035--1064}, 1997.

\bibitem[Grant and Boyd(2010)]{CVX}
M.~Grant and S.~Boyd.
\newblock {CVX}: Matlab software for disciplined convex programming, version
  1.21.
\newblock \url{http://cvxr.com/cvx}, Aug. 2010.

\bibitem[Karabis et~al.(2009)Karabis, Belotti, and Baltas]{Karabis2009}
A.~Karabis, P.~Belotti, and D.~Baltas.
\newblock Optimization of catheter position and dwell time in prostate {HDR}
  brachytherapy using {HIPO} and linear programming.
\newblock In \emph{World Congress on Medical Physics and Biomedical
  Engineering}, volume 25/I of \emph{Germany IFMBE Proceedings},
  \href{http://dx.doi.org/10.1007/978-3-642-03474-9\_172}{612--615}, 2009.

\bibitem[Kropat and Weber(2008)]{Kropat2010}
E.~Kropat and G.-W. Weber.
\newblock Robust regression analysis for gene-environment and eco-finance
  networks.
\newblock 2008.
\newblock Unpublished results.

\bibitem[Lessard and Pouliot(2001)]{Lessard2001}
E.~Lessard and J.~Pouliot.
\newblock Inverse planning anatomy-based dose optimization for
  {HDR}-brachytherapy of the prostate using fast simulated annealing algorithm
  and dedicated objective function.
\newblock \emph{Medical Physics},
  \href{http://dx.doi.org/10.1016/j.brachy.2007.10.001}{28\penalty0
  (5):\penalty0 773--779}, 2001.

\bibitem[L\"{o}fberg(2012)]{Lofberg:2010}
J.~L\"{o}fberg.
\newblock Automatic robust convex programming.
\newblock \emph{Optimization Methods and Software},
  \href{http://dx.doi.org/10.1080/10556788.2010.517532}{27\penalty0
  (1):\penalty0 115--129}, 2012.

\bibitem[Ng et~al.(2010)Ng, Sun, and Fowler]{Ng2010557}
T.~S. Ng, Y.~Sun, and J.~Fowler.
\newblock Semiconductor lot allocation using robust optimization.
\newblock \emph{European Journal of Operations Research},
  \href{http://dx.doi.org/10.1016/j.ejor.2010.01.021}{205\penalty0
  (3):\penalty0 557--570}, 2010.

\bibitem[\"{O}zmen et~al.(2011)\"{O}zmen, Weber, Batmaz, and Kropat]{Ozmen2011}
A.~\"{O}zmen, G.-W. Weber, I.~Batmaz, and E.~Kropat.
\newblock Rcmars: Robustification of cmars with different scenarios under
  polyhedral uncertainty set.
\newblock \emph{Communications in Nonlinear Science and Numerical Simulation},
  \href{http://dx.doi.org/10.1016/j.cnsns.2011.04.001}{16\penalty0
  (12):\penalty0 4780--4787}, 2011.

\bibitem[Rockafellar(1970)]{Rockafellar}
R.~T. Rockafellar.
\newblock \emph{Convex Analysis}.
\newblock Princeton University Press, 1970.

\bibitem[Soyster(1973)]{Soyster1973}
A.~L. Soyster.
\newblock Convex programming with set-inclusive constraints and applications to
  inexact linear programming.
\newblock \emph{Operations Research},
  \href{http://dx.doi.org/10.1287/opre.21.5.1154}{21\penalty0 (5):\penalty0
  1154--1157}, 1973.

\bibitem[Thiele(2004)]{thiele2004robust}
A.~Thiele.
\newblock \emph{A robust optimization approach to supply chains and revenue
  management}.
\newblock \href{http://dspace.mit.edu/handle/1721.1/16693}{2004}.
\newblock PhD thesis, Massachusetts Institute of Technology.

\bibitem[Wei et~al.(2009)Wei, Li, and Cai]{Wei2009}
C.~Wei, Y.~Li, and X.~Cai.
\newblock Robust optimal policies of production and inventory with uncertain
  returns and demand.
\newblock \emph{International Journal of Production Economics},
  \href{http://dx.doi.org/10.1016/j.ijpe.2009.11.008}{134\penalty0
  (2):\penalty0 357--367}, 2009.

\bibitem[Xu et~al.(2009)Xu, Caramanis, and Mannor]{Xu:2009}
H.~Xu, C.~Caramanis, and S.~Mannor.
\newblock Robustness and regularization of support vector machines.
\newblock \emph{The Journal of Machine Learning Research},
  \href{http://dx.doi.org/10.1145/1577069.1755834}{10:\penalty0 1485--1510},
  2009.

\bibitem[Yan{\i}ko\u{g}lu et~al.(2012)Yan{\i}ko\u{g}lu, Ben-Tal, den Hertog,
  and Laurent]{Yanikoglu2012}
{\.{I}}.~Yan{\i}ko\u{g}lu, A.~Ben-Tal, D.~den Hertog, and M.~Laurent.
\newblock New nonlinear decision rules for adjustable robust counterparts.
\newblock 2012.
\newblock Unpublished results.

\end{thebibliography}

\end{document}